\renewcommand{\@secnumfont}{\bfseries}
\def\subsection{\@startsection{subsection}{3}%
  \z@{.5\linespacing\@plus.7\linespacing}{.2\linespacing}%
  {\normalfont\bfseries}}
\renewcommand{\paragraph}[1]{\ \\ \textbf{#1}}
\newtheorem{theorem}{Theorem}
\newtheorem{lemma}{Lemma}
\newcommand{\esp}[1]{\mathbb{E}\left[ #1 \right]}
\newcommand{\grando}[1]{O\left(#1 \right)}
\newcommand{\adfs}{ADFS}
\newcommand{\esdacd}{ESDACD}
\newcommand{\msda}{MSDA}
\newcommand{\D}{\mathcal{D}}
\title[ADFS]{Asynchronous Accelerated Proximal Stochastic Gradient for Strongly Convex Distributed Finite Sums}
\date{}
\author{
Hadrien Hendrikx $^\dagger$ 
\qquad Francis Bach \qquad Laurent Massouli\'e $^\dagger$
}
\address{INRIA - Département d’informatique de l’ENS \\
Ecole normale supérieure, CNRS, INRIA \\ 
PSL Research University, 75005 Paris, France}
\address{$\dagger$ Microsoft-INRIA Joint Centre}
\email{hadrien.hendrikx@inria.fr}
\email{francis.bach@inria.fr}
\email{laurent.massoulie@inria.fr}
\begin{document}

\begin{abstract}
In this work, we study the problem of minimizing the sum of strongly convex functions split over a network of $n$ nodes. We propose the decentralized and asynchronous algorithm ADFS to tackle the case when local functions are themselves finite sums with $m$ components. ADFS converges linearly when local functions are smooth, and matches the rates of the best known finite sum algorithms when executed on a single machine. On several machines, ADFS enjoys a $O (\sqrt{n})$ or $O(n)$ speed-up depending on the leading complexity term as long as the diameter of the network is not too big with respect to $m$. This also leads to a $\sqrt{m}$ speed-up over state-of-the-art distributed batch methods, which is the expected speed-up for finite sum algorithms. In terms of communication times and network parameters, ADFS scales as well as optimal distributed batch algorithms. As a side contribution, we give a generalized version of the accelerated proximal coordinate gradient algorithm using arbitrary sampling that we apply to a well-chosen dual problem to derive ADFS. Yet, ADFS uses primal proximal updates that only require solving one-dimensional problems for many standard machine learning applications. Finally, ADFS can be formulated for non-smooth objectives with equally good scaling properties. We illustrate the improvement of ADFS over state-of-the-art approaches with simulations.
\end{abstract}

\maketitle

{ \color{red} \fbox{\textbf{This is a preliminary version of the paper \url{https://arxiv.org/abs/1905.11394}}}}

\section{Introduction}

\begin{table*}[t]
\begin{center}
\begin{small}
\begin{sc}
\begin{tabular}{lcccc}
\toprule
Algorithm & Distributed & Asynchronous & Stochastic & Time \\
\midrule
Point-SAGA~\citep{defazio2016simple} & $\times$ & $\times$ & \checkmark & $nm + \sqrt{n m\kappa_s}$\\
AGD~\citep{nesterov2013introductory} & $\times$ & $\times$ & $\times$ & $nm\sqrt{\kappa_b}$\\
MSDA~\citep{scaman2017optimal} & \checkmark & $\times$ & $\times$ & $\sqrt{\kappa_b}\left(m + \frac{\tau}{\sqrt{\gamma}}\right)$\\
ESDACD~\citep{hendrikx2018accelerated} & \checkmark & \checkmark & $\times$ & $\left(m + \tau \right)\sqrt{\frac{\kappa_b}{\gamma}}$\\
DSBA~\citep{shen2018towards} & \checkmark & $\times$ & \checkmark & $\left(m + \kappa_s + \gamma^{-1}\right) \left(1 + \tau\right)$\\
\adfs~(this paper) & \checkmark & \checkmark & \checkmark & $m + \sqrt{m\kappa_s} + (1 + \tau)\sqrt{ \frac{\kappa_s}{\gamma}}$\\
\bottomrule
\end{tabular}
\end{sc}
\end{small}
\end{center}
\caption{Comparison of various state-of-the-art decentralized algorithms to reach accuracy $\varepsilon$ in regular graphs. Constant factors are omitted, as well as the $\log\left(\varepsilon^{-1}\right)$ factor in the \emph{\textsc{Time}} column. Reported runtimes for Point-SAGA and accelerated gradient descent (AGD) correspond to running them on a single machine with $n m$ samples. Times for ADFS and ESDACD are given for regular graphs.} 
\label{fig:table_speeds}
\vskip -0.1in
\end{table*}

The success of machine learning models is mainly due to their capacity to train on huge amounts of data. Distributed systems are the only way to process more data than one computer can store, but they can also be used to increase the pace at which models are trained by splitting the work among many computing nodes. Therefore, most machine learning optimization problems can be cast in the following way:
\begin{equation}
\label{eq:distributed_problem}
    \min_{\theta \in \mathbb{R}^d}\ \  \sum_{i=1}^n f_i(\theta),\ \ \ \ \ \ \  f_i(\theta) = \sum_{(x,y) \in \D_i} \ell(\theta, x, y),
\end{equation} 
where a loss function $\ell$ measures how far the prediction given by the parameter $\theta$ of size $d$ is from the true label~$y$ when given a sample $x$. We model the network by a graph of $n$ nodes, where the node $i$ has a local dataset $\D_i$ of size $m$, meaning that the dataset has $nm$ samples in total. We assume that each $f_i$ is $\sigma_i$-strongly convex and that each  component $\ell(\theta, x_j, y_j)$ coming from dataset $\D_i$ is $L_{j}$-smooth in the parameter~$\theta$ for a given training sample $(x_j, y_j)$. For more detailed definitions of smoothness and strong convexity, see, e.g.,~\citet{nesterov2013introductory,bubeck2015convex}. 

These problems are usually solved by first-order methods, and the basic distributed algorithms compute gradients in parallel over several machines~\citep{nedic2009distributed}. Another way to speed up training is to use \emph{stochastic} algorithms~\citep{bottou2010large}, that take advantage of the finite sum structure of the problem to use cheaper iterations while preserving fast convergence. This paper aims at bridging the gap between stochastic and distributed algorithms when local functions are smooth and strongly convex. The next paragraphs discuss the relevant state of the art for both distributed and stochastic methods, and Table~\ref{fig:table_speeds} sums up the speed of the main algorithms available to solve Problem~\eqref{eq:distributed_problem}. In the rest of this paper, following~\citet{scaman2017optimal}, we assume that processing one sample takes one unit of time, and that each communication takes time~$\tau$. The mixing time of the graph, denoted $\gamma^{-1}$, measures how well information spreads in the graph. It is a natural constant that appears in many distributed algorithms, and it can be be approximated as the inverse of the eigengap of the Laplacian matrix of the graph. Following notations from~\citet{xiao2017dscovr}, we define the batch and stochastic condition numbers $\kappa_b$ and $\kappa_s$ (which are classical quantities in the analysis of finite sum optimization) such that for all $i$, $\kappa_b \geq M_i / \sigma_i$ where $M_i$ is the smoothness constant of the function $f_i$ and $\kappa_s \geq \sum_{j=1}^m L_{j} / \sigma_i$. Generally, $\kappa_s = \grando{\kappa_b}$, leading to the practical superiority of stochastic algorithms.  

\paragraph{Distributed gradient methods.}
The aim of distributed algorithms is to obtain (when possible) a linear speedup, meaning that when run on $n$ nodes, the algorithm is $n$ times faster than the fastest single machine algorithm. In the standard centralized setting, all nodes can simply compute the gradient of their local functions and the global gradient is computed by a server as the sum of all local gradients. Provided the network is fast enough, this approach yields a linear speedup as long as there are less machines than samples~\citep{scaman2017optimal}. Instead of waiting for all nodes to send their gradients, asynchronous methods in which the server updates its parameter as soon as a worker finishes are generally used~\citep{recht2011hogwild, xiao2017dscovr}. Yet, computing gradients on older (or even inconsistent) versions of the parameter harms convergence~\citep{chen2016revisiting}. This effect combined with the communication bottleneck at the server generally causes centralized algorithms to only scale up to a certain point~\citep{leblond2016asaga, lian2017can}.

Thus, this paper focuses on decentralized methods that do not have this bottleneck~\citep{duchi2012dual}. In their synchronous versions, they alternate rounds of computations (in which all nodes compute gradients with respect to their local data) and communications in which nodes perform linear combinations of their freshly computed gradients with their neighbors~\citep{shi2015extra, nedic2017achieving, tang2018d}. Communication steps can thus be abstracted as multiplication by a so-called gossip matrix. MSDA~\citep{scaman2017optimal} is a batch decentralized synchronous algorithm, and it is optimal with respect to the constants $\gamma$ and $\kappa_b$ among algorithms that can only perform these two operations. As shown in Table~\ref{fig:table_speeds}, MSDA achieves a linear speedup compared to AGD as long as $\tau \gamma^{-\frac{1}{2}}$ is not too big compared to $m \kappa_b$. Yet, synchronous algorithms greatly suffer from something called the \emph{straggler} effect, meaning that they need to wait for the slowest node at each iteration. 

A way to mitigate this straggler effect is to use asynchronous decentralized algorithms inspired from randomized gossip algorithms~\citep{boyd2006randomized}, in which updates involve two nodes instead of the whole network \citep{nedic2009distributed, johansson2009randomized, colin2016gossip}.
Yet, asynchrony introduces variance and it is an open question whether all decentralized synchronous algorithms can have fast asynchronous equivalents. Recently, \esdacd~\citep{hendrikx2018accelerated} made a step in this direction by achieving the same rates as MSDA~\citep{scaman2017optimal}, its synchronous counterpart, but only when computations are faster than communications. Besides, ESDACD has a speedup of at best $n \gamma^{-\frac{1}{2}}$ over AGD~\citep{nesterov2013introductory}, an optimal single-machine batch methods. Therefore, it is mainly suited to graphs with a high connectivity.
Finally, it requires computing the gradients of the Fenchel conjugate of the full local functions, which are generally hard to get. 

\paragraph{Stochastic algorithms for finite sums.}
So far, we have only presented \emph{batch} methods that rely on computing \emph{full gradient} steps of each function $f_i$. Modern optimization methods generally process gradients on single data points as soon as they are computed. They can significantly outperform batch gradient methods when the dataset is large, as the fastest algorithms scale in $\sqrt{m\kappa_s}$ compared to $m \sqrt{\kappa_b}$ for batch methods. In the smooth and strongly convex setting, \emph{variance reduction} methods are needed to obtain stochastic algorithms converging linearly with rate $m + \kappa_s$, such as SAG~\citep{schmidt2017minimizing}, SDCA~\citep{shalev2013stochastic}, SVRG~\citep{johnson2013accelerating} and SAGA~\citep{defazio2014saga}. The $m + \sqrt{m\kappa_s}$ rate can then be obtained by using various acceleration techniques: Accelerated SDCA~\citep{shalev2014accelerated} was extended to the generic catalyst acceleration framework~\citep{lin2015universal}, whereas SAGA can be accelerated by simply using proximal steps instead of gradient steps~\citep{defazio2016simple}. APCG~\citep{lin2015accelerated} can be applied to a well-chosen dual formulation and Katyusha~\citep{allen2017katyusha} provides direct acceleration in the primal by using a negative momentum term.

\paragraph{Stochastic distributed algorithms.}
Batch computations can be easily parallelized, but stochastic algorithms gain their speed-up from processing one example at a time. Although it is often quite tempting to heavily distribute computations, care must be taken and it is not always the best choice to make. In particular, optimal batch distributed algorithms such as MSDA only improves Point-SAGA when $n > m$, i.e., when there are more nodes than samples per node. This is highly unlikely in a standard machine learning setup with massive data on a moderate size computing cluster, and highlights the need for new algorithms capable of efficiently leveraging extra computing power.  

As a matter of fact, and despite its sublinear convergence rate, efficiently distributing plain SGD in various settings is still an active line of work~\citep{lian2017asynchronous, mishchenko2018delay, tang2018d, olshevsky2018robust} for it is a both very simple and efficient algorithm. In the smooth and strongly convex setting, DSA~\citep{mokhtari2016dsa} and later DSBA~\citep{shen2018towards} obtained linearly converging stochastic distributed algorithms with more sophisticated approaches. Yet, they do not enjoy the $\sqrt{m\kappa_s}$ accelerated rates  and they need an excellent network with very fast communications since nodes communicate each time they process a sample, resulting in many communication steps.

As shown in Table~\ref{fig:table_speeds}, ADFS does not suffer from these drawbacks. It is very similar to APCG~\citep{lin2015accelerated} when executed on a single machine and thus gets the same $m + \sqrt{m\kappa_s}$ rate. Besides, this rate stays unchanged when the number of machines grows, meaning that it can process $n$ times more data in the same amount of time on a network of size $n$. This scaling lasts as long as $(1 + \tau)\sqrt{\kappa_s} \gamma^{-\frac{1}{2}} < m + \sqrt{m \kappa_s}$. This means that ADFS is at least as fast as MSDA unless both the network is extremely fast (communications are faster than a single gradient computation) and the diameter of the graph is very large compared to the size of the local finite sums. Therefore, \adfs~outperforms \msda~and DSBA in most standard machine learning settings, combining optimal network scaling with the efficient distribution of optimal sequential finite-sum algorithms.

Section~\ref{sec:model} details the model and the derivations to obtain a relevant dual problem. In Section~\ref{sec:alg}, we introduce a generalized version of APCG and then apply it to the dual problem in order to derive the generic ADFS algorithm. Section~\ref{sec:perfs} analyzes the speed of ADFS for a specific choice of parameters leading to the rate presented in Table~\ref{fig:table_speeds}. Finally, we illustrate the strengths of ADFS with a set of experiments covering various settings in Section~\ref{sec:experiments}.

\section{Model and derivations}
\label{sec:model}

\begin{figure}
  \centering
  \includegraphics[width=0.5\linewidth]{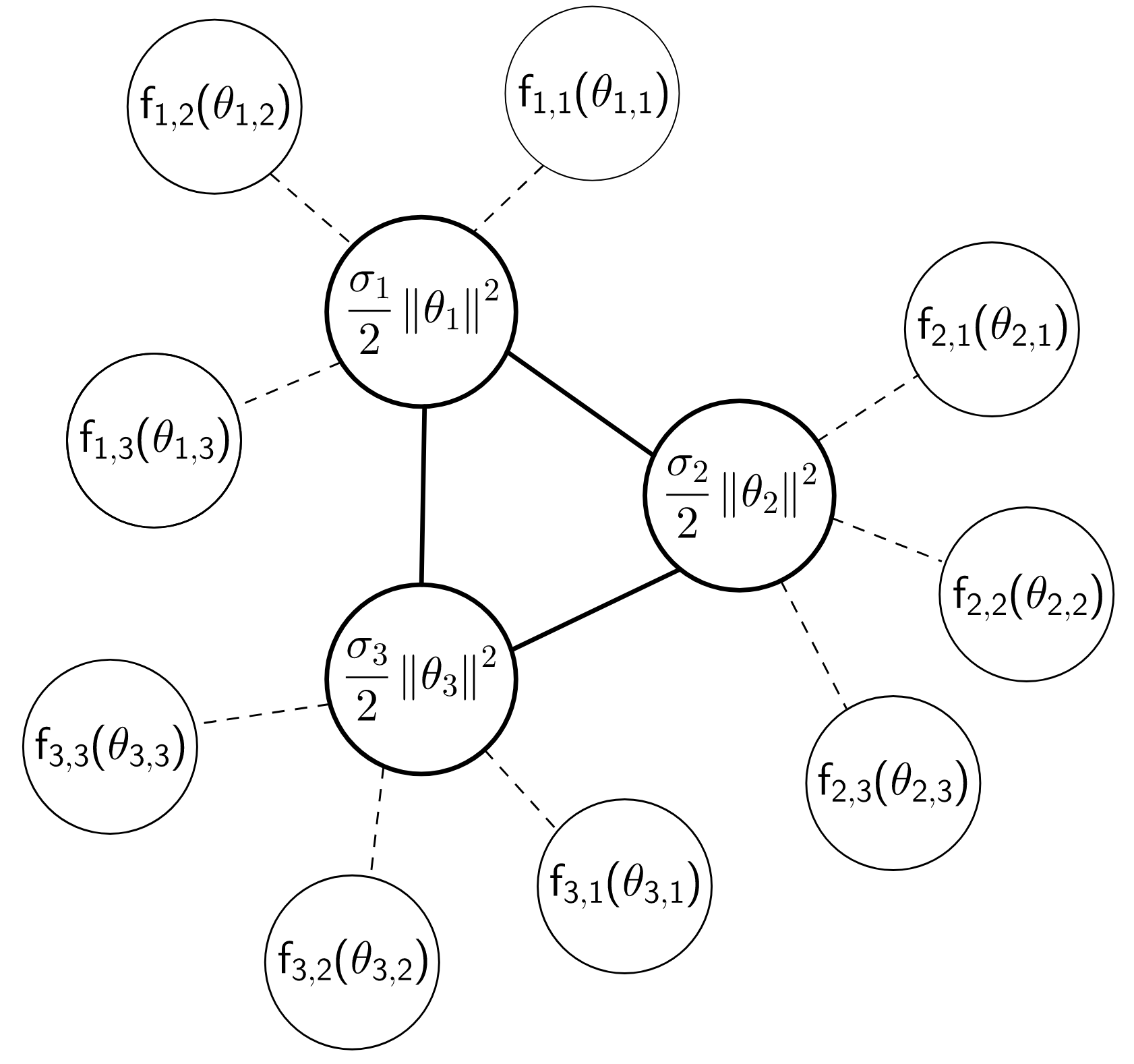}
  
\caption{Illustration of the augmented graph for $n=3$ and $m=3$. Thick lines are actual edges whereas dashed lines are virtual edges.}
\label{fig:extended_graph}
\end{figure}

We now specify our approach to solve the problem in Equation~\eqref{eq:distributed_problem}. Using the strong convexity assumption, we can rewrite all local functions in the form $f_i(x) = \sum_{j \in \D_i} f_{i,j}(x) + \frac{\sigma_i}{2}\|x\|^2$ for any $x \in \mathbb{R}^d$, with $f_{i,j}$ convex and $L_j$-smooth (we omit the $i$ in the notation), and where
$ \|x\|$ denotes the Euclidean norm.
 Then, \esdacd~and \msda~are obtained by applying accelerated (coordinate) gradient descent to the dual formulation of the following problem:
\begin{align*}
    &\min_{\theta \in \mathbb{R}^{n \times d}}\ \  \sum_{i=1}^n f_i(\theta_i) \ \  \text{ such that } \  \theta_i = \theta_j, \  \forall (i,j) \in E.
\end{align*}
In order to get a stochastic algorithm for finite sums, we consider a new virtual graph. The transformation is sketched in Figure~\ref{fig:extended_graph}, and basically consists in replacing each node by a star network. The centers of the stars are connected by the actual communication network, and the center of the star network replacing node $i$ has the local function $f^{\rm comm}_i(x) = \frac{\sigma_i}{2} \| x \|^2$. The center of node $i$ is then connected with $m$ nodes whose local functions are the functions $f_{i,j}$ for $j \in \D_i$. Updates in this augmented graph either involve two center nodes or the tip of a star and its center. Therefore, they never involve more than one function $f_{i,j}$. 
We denote by $E^{\rm comp}$ the set of virtual edges and $E^{\rm comm} = E$ the edges of the actual communication network. If we call the obtained virtual graph $G^+ = (E^+, V^+)$, updating an edge $(i,j)$ of $G^+$ consists in:
\begin{itemize}
\item Locally updating the parameter of node $i$ using only function $f_{i,j}$ if $(i,j) \in E^{\rm comp}$. This update takes time~$1$ and only involves the physical node $i$. If $(i,j) \in E^{\rm comp}$ then $i$ is the center of the star and $j$ the tip with function $f_{i,j}$.  We call $V_i$ the set of virtual nodes linked to node $i$.
    \item Performing a communication update between nodes $i$ and $j$ if $(i,j) \in E^{\rm comm}$. This update takes time $\tau$ and only involves nodes $i$ and $j$.  
\end{itemize}
Then, we can consider one parameter vector $\theta_{i,j}$ for each function $f_{i,j}$ and one vector $\theta_i$ for each
$f^{\rm comm}_i(x) = \frac{\sigma_i}{2} \| x \|^2$. Therefore, there is one parameter vector for each node in $V^+$. We impose the standard constraint that the parameter of each node must be equal to the parameters of its neighbors. The difference is that the neighbors are now taken in $G^+$ rather than $G$. This yields the following minimization problem:
\begin{align*}
    \begin{split}
        \min_{\theta \in \mathbb{R}^{|V^+| \times d}} \  &\sum_{i=1}^n \bigg[ \ \ \sum_{j \in V_i} f_{i,j}(\theta_{i,j}) + \frac{\sigma_i}{2} \| \theta_{i} \|^2 \bigg] \\
        \mbox{ such that } & {\theta_i = \theta_j \ \ \forall (i,j) \in E^{\rm comm}}\\
       & {\theta_{i,j} = \theta_i \ \ \forall j \in \{1, .., m\}}.
    \end{split}
\end{align*}
Noting $F(\theta) =  \sum_{i=1}^n \left[\sum_{j = 1}^{m} f_{i,j}(\theta_{i,j}) + \frac{\sigma_i}{2} \| \theta_{i} \|^2 \right]$, it is equivalent to:
\begin{equation}
\label{eq:primal_constrained}
    \min_{\theta \in \mathbb{R}^{|V^+| \times d}, \  A^T \theta = 0}  F(\theta),
\end{equation}
where $A \in \mathbb{R}^{|V^+| \times |E^+|}$ is such that for all $(i,j) \in E^+$, the column of $A$ corresponding to edge $(i,j)$ is equal to $\mu_{ij}(e_i - e_j)$ for some $\mu_{ij} > 0$. We note $e_{i}$ the unit vector of $\mathbb{R}^{|V^+|}$ corresponding to node $i$, and $e_{ij}$ for its virtual node $j$. Therefore, the dual formulation of this problem writes:
\begin{equation}
    \max_{\lambda \in \mathbb{R}^{|E^+| \times d}} - \sum_{i=1}^n \bigg[\sum_{j \in V_i} f_{i,j}^*(e_{ij}^TA\lambda) + \frac{1}{2\sigma_i} \| e_i^TA\lambda \|^2 \bigg],
\end{equation}
where the parameter $\lambda$ is the Lagrange multiplier associated with the constraints of Problem~\eqref{eq:primal_constrained}---more precisely, for an edge $(i,j)$, $\lambda_{ij} \in \mathbb{R}^d$ is the Lagrange multiplier associated with $\mu_{ij}( \theta_i -\theta_j)=0$. At this point, the functions $f_{i,j}$ are only assumed to be convex (and not necessarily strongly convex) meaning that the functions $f_{i,j}^*$ are potentially non-smooth. This problem could be bypassed by adding a strongly convex term to the tip of the stars before going to the dual formulation and subtracting it from the center. Yet, this approach fails when $m$ is large because the smoothness parameter of $f_{i,j}^*$ scales as $m / \sigma_i$ at best, whereas a smoothness of order $\sigma_i^{-1}$ is required to match optimal finite-sum methods. 

Instead, we will perform proximal updates on the $f_{i,j}^*$ terms. The rate of proximal gradient methods such as APCG does not improve if the non-smooth functions $f_{i,j}^*$ are strongly convex rather than simply convex.   Each $f_{i,j}^*$ is $(L_{j}^{-1})$-strongly convex (because $f_{i,j}$ was $L_{j}$-smooth), so we can rewrite the previous equation in order to put all the strong convexity in the quadratic term of the center node. We deduce from the form of $A$ that $e_{ij}^TA\lambda = - \mu_{ij} \lambda_{ij}$   when $(i,j)$ is a local edge. Therefore, if we define $\psi_{i,j}: x \mapsto f_{i,j}^*(- \mu_{ij} x) - \frac{\mu_{ij}^2}{2L_{j}}\|x\|^2$
and
\begin{equation}
    \label{eq:qa}
    q_A: x \mapsto \sum_{i=1}^n \bigg[\frac{1}{2\sigma_i} \| e_i^TAx \|^2 + \sum_{j \in V_i} \frac{1}{2L_{j}} \| e_{ij}^TAx \|^2\bigg], \!\!
\end{equation}
then $q_A$ can be rewritten as $q_A:x \rightarrow \frac{1}{2}x^T A^T\Sigma^{-1} A x$, where $\Sigma$ is the diagonal matrix such that $\Sigma_{ii} = \sigma_i$ if $i$ is a center node and $\Sigma_{ii} = L_{i}$ otherwise.  With these manipulations, the dual problem now writes:
\begin{equation}
\label{eq:dual_problem}
    \min_{\lambda \in \mathbb{R}^{|E^+| \times d}} q_A(\lambda) + \psi(\lambda),
\end{equation}
where $\psi(\lambda) = \sum_{i=1}^n \sum_{j = 1}^{m} \psi_{i,j}(\lambda_{ij})$.
The coordinate gradient of $q_A$ in the direction $(i,j)$ is equal to $\nabla_{ij}q_A: x \mapsto e_{ij}^T A^T \Sigma^{-1}Ax$, meaning that coordinate updates of the dual problem involve at most two nodes of the initial network. 

Therefore, using coordinate descent algorithms with a dual formulation from a well-chosen augmented graph allows us to handle both computations and communications in the same framework. Then, we can balance the ratio between communications and computations by simply adjusting the probability of picking a given kind of edge.

\section{Algorithm}
\label{sec:alg}
\subsection{Generalized APCG}
In this subsection, we study the generic problem:
\begin{equation}
\label{eq:generic_problem}
    \min_{x \in \mathbb{R}^d}\ \  f(x) + \sum_{i=1}^d \psi_i(x_i),
\end{equation}
where all the functions $\psi_i$ are convex and $f$ is $(\sigma_A)$-strongly convex in the norm $A^+A$ where $A^+A$ is an arbitrary projector, meaning that for all $x,y \in \mathbb{R}^d$: $$f(x) - f(y) \! \geq \! \nabla f(y)^T \! A^+A(x - y) + \textstyle \frac{\sigma_A}{2}(x - y)^T \! A^+A (x - y).$$ Besides, $f$ is assumed to be $(M_i)$-smooth in direction $i$ meaning that its gradient in the direction $i$ (noted $\nabla_i f$) is $(M_i)$-Lipschitz.
This is the general setting of the problem of Equation~\eqref{eq:dual_problem}, and proximal coordinate gradient algorithms are known to work well for these problems, which is why we would like to use APCG~\citep{lin2014accelerated}. Yet, we would like to pick different probabilities for computing and communication edges, whereas APCG only handles uniform coordinates sampling. Furthermore, the first term is strongly-convex only in the semi-norm $A^+A$, so APCG cannot be applied straightforwardly. Therefore, we introduce a more general version of APCG, presented in Algorithm~\ref{algo:generalized_apcg}, and we explicit its rate in Theorem~\ref{thm:gen_apcg}. This generalized APCG can then directly be applied to solve the problem of Equation~\eqref{eq:dual_problem}.

Let $R_i = e_i^T A^+A e_i$ and $p_i$ be the probability that coordinate~$i$ is picked to be updated.
$S$ is such that $S^2 \geq \frac{L_i R_i}{p_i^2}$ for all $i$. Then, following the approaches of~\citet{nesterov2017efficiency} and \citet{lin2015accelerated}, we define sequences $(a_t)$, $(A_t)$ and $(B_t)$ such that $  a_{t+1}^2 S^2 = A_{t+1} B_{t+1}$ where $B_{t+1} = B_t + \sigma_A a_{t+1}$ and $A_{t+1} = A_t + a_{t+1}$. From these, we define sequences $(\alpha_t)$ and $(\beta_t)$ such that $\alpha_t = \frac{a_{t+1}}{A_{t+1}}$ and  $\beta_t = \sigma_A \frac{ a_{t+1}}{B_{t+1}}$. Finally, we define the sequences $y_t$, $v_t$ and $x_t$ that are all initialized at $0$ and $(w_t)$ such that for all $t$, $w_t = (1 - \beta_t) v_t + \beta_t y_t$. We define $\eta_i = \frac{a_{t+1}}{B_{t+1} p_i}$ and the proximal operator:
\begin{equation*}
    {\rm prox}_{\eta_i \psi_i}: x \mapsto \arg\min_v \frac{1}{2\eta_i}\|v - x\|^2 + \psi_i(v).
\end{equation*}
\begin{algorithm}
\caption{Generalized APCG}
\label{algo:generalized_apcg}
\begin{algorithmic}
\STATE $y_0 = 0$, $v_0 = 0$, $t = 0$
\WHILE{$t < T$}
\STATE $y_t = \frac{(1 - \alpha_t) x_t + \alpha_t(1 - \beta_t)v_t}{1 - \alpha_t \beta_t}$
\STATE Sample $i$ with probability $p_i$
\STATE $z_{t+1} = z_{t+1} = (1 - \beta_t) v_t + \beta_t y_t - \eta_i \nabla_i f(y_t)$
\STATE $v_{t+1}^{(i)} = {\rm prox}_{\eta_i \psi_i}\left(z_{t+1}^{(i)}\right)$
\STATE $x_{t+1} = y_t + \frac{\alpha_t R_i}{p_i}(v_{t+1} - (1 - \beta_t) v_t - \beta_t y_t)$
\ENDWHILE
\end{algorithmic}
\end{algorithm}

For generalized APCG to work well, the proximal operator needs to be taken in norm $A^+A$, and so the non-smooth $\psi_i$ terms have to be separable in the norm $A^+A$. Since $A^+A$ is a projector, this constraint is equivalent to stating that either $R_i = 1$ (separability in the norm $A^+A$), or $\psi_i = 0$ (no proximal update to make). Therefore, we impose this natural constraint, which allows us to formulate the proximal update in standard squared norm since it is only used for coordinates $i$ for which $A^+A e_i = e_i$. This assumption is satisfied for our dual problem. Then, we can formulate Algorithm~\ref{algo:generalized_apcg} and analyze its rate, which is done by Theorem~\ref{thm:gen_apcg}.

\begin{theorem}
\label{thm:gen_apcg}
Let $F: x \mapsto f(x) + \sum_{i=1}^n \psi_i\left(x^{(i)}\right)$ such that all $\psi_i$ are convex and the function $f$ is $\sigma_A$-strongly-convex in the norm $A^+A$ and $(M_i)$-smooth in the direction $i$. If $1 - \beta_t - \frac{\alpha_t R_i}{p_i} > 0$ and $\psi_i = 0$ whenever $R_i = e_i^T A^+A e_i < 1$, the sequences $v_t$ and $x_t$ generated by APCG verify:
\begin{equation*}
    B_t\esp{\|v_t - \theta^*\|^2_{A^+A}} + 2 A_t \left[\esp{F(x_t)} - F(\theta^*)\right] \leq C_0,
\end{equation*}
where $C_0 = B_0 \|v_0 - \theta^*\|^2 + 2A_0\left[F(x_0) - F(\theta^*)\right]$ and $\theta^*$ is the minimizer of $F$. When  $\sigma_A > 0$, we can choose $\alpha_t = \beta_t = \rho$ and $A_t = \sigma_A^{-1} B_t = (1 - \rho)^{-t}$ with $\rho = \sqrt{\sigma_A}S^{-1}$.
\end{theorem}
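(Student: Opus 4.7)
The plan is to establish the stated inequality by showing that the Lyapunov function
$\Phi_t = B_t\|v_t - \theta^*\|^2_{A^+A} + 2A_t[F(x_t) - F(\theta^*)]$
is a supermartingale: $\mathbb{E}[\Phi_{t+1}\mid \mathcal{F}_t] \leq \Phi_t$. The bound $\mathbb{E}[\Phi_t]\leq C_0$ then follows by iterated expectation, and the strongly convex rate is obtained by plugging in the constant choice $\alpha_t=\beta_t=\rho$, which makes $A_t = \sigma_A^{-1}B_t = (1-\rho)^{-t}$. I would follow the estimate-sequence template of \citet{nesterov2017efficiency} and its coordinate-sampling extension in \citet{lin2015accelerated}, adapting both to (i)~non-uniform sampling via the weights $p_i$, and (ii)~strong convexity in the seminorm $A^+A$ instead of the full Euclidean norm.

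The heart of the argument is the one-step analysis, which I would assemble from three ingredients. First, the coordinate smoothness of $f$ applied to $x_{t+1}$ together with the update $x_{t+1} - y_t = (\alpha_t R_i/p_i)(v_{t+1} - w_t)$, where only the $i$-th coordinate is modified, yields an upper bound on $f(x_{t+1})$ in terms of $f(y_t)$, a linear term in $\nabla_i f(y_t)(v_{t+1}^{(i)} - w_t^{(i)})$, and a quadratic penalty that pairs $M_i$ with $(\alpha_t R_i/p_i)^2$. Second, the prox optimality gives the standard three-point inequality
\begin{equation*}
\eta_i\psi_i(v_{t+1}^{(i)}) + \tfrac{1}{2}\|v_{t+1}-\theta^*\|^2 \leq \eta_i\psi_i(\theta^{*(i)}) + \tfrac{1}{2}\|w_t-\theta^*\|^2 - \tfrac{1}{2}\|v_{t+1}-w_t\|^2 - \eta_i\nabla_i f(y_t)(v_{t+1}^{(i)}-\theta^{*(i)}),
\end{equation*}
which together with the first bound cancels the explicit gradient terms thanks to the choice $\eta_i = a_{t+1}/(B_{t+1}p_i)$. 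Third, convexity of $f$ combined with its $\sigma_A$-strong convexity in the $A^+A$-seminorm, applied to $w_t$ and $\theta^*$, produces the term $-\sigma_A\|w_t-\theta^*\|^2_{A^+A}$ that is consumed by the weight $\beta_t = \sigma_A a_{t+1}/B_{t+1}$ when translating $\|w_t-\theta^*\|^2_{A^+A}$ to $\|v_t-\theta^*\|^2_{A^+A}$ via the convex combination $w_t = (1-\beta_t)v_t + \beta_t y_t$.

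To close the descent, I would take expectation over the index $i$. The factor $R_i/p_i$ in the $x_{t+1}$ update compensates the sampling probability so that $\mathbb{E}[\nabla_i f(y_t)(x_{t+1}^{(i)} - y_t^{(i)})] = \alpha_t\,\nabla f(y_t)^T A^+A(v_{t+1}-w_t)$, i.e.\ the cross-term lives naturally in the $A^+A$-seminorm, which is why strong convexity in that seminorm is enough. The quadratic penalty is controlled through $S^2 \geq L_iR_i/p_i^2$, which ensures that $\alpha_t^2 R_i^2 M_i/p_i^2 \leq \alpha_t^2 S^2$, and combined with the defining identity $a_{t+1}^2 S^2 = A_{t+1}B_{t+1}$, all terms telescope into the inequality $\mathbb{E}[\Phi_{t+1}\mid\mathcal{F}_t]\leq \Phi_t$. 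The positivity condition $1-\beta_t-\alpha_t R_i/p_i>0$ and the constraint that $\psi_i=0$ on coordinates with $R_i<1$ are used precisely so that the convex-combination argument relating $y_t$ to $x_t$ and $w_t$ remains valid on the non-smooth part.

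The main obstacle is the joint handling of the seminorm $A^+A$ and the non-uniform sampling: in classical APCG the prox is separable and the norm is Euclidean, so coordinate-wise reasoning applies cleanly, whereas here $A^+A$ couples coordinates and one must verify that the proximal update only touches directions where $R_i=1$, so that $\|v_{t+1}-\theta^*\|^2$ and $\|v_{t+1}-\theta^*\|^2_{A^+A}$ agree on the updated coordinate. Once this separability-plus-projector structure is exploited, the algebra for the Nesterov-style telescoping is essentially identical to the standard APCG proof, and the closed form $\rho = \sqrt{\sigma_A}/S$ then arises from solving $\rho^2 S^2 = (1-\rho)^{-1}\sigma_A(1-\rho)^{-1}\cdot(1-\rho)$ under $\alpha_t=\beta_t=\rho$.
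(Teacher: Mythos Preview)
Your overall architecture matches the paper's: combine coordinate smoothness at $x_{t+1}$, the prox three-point inequality at $v_{t+1}$, and $\sigma_A$-strong convexity in the $A^+A$ seminorm, then take expectation and telescope. Your treatment of the seminorm issue (proximal updates only touch coordinates with $R_i=1$, so Euclidean and $A^+A$ norms agree there) is also the paper's mechanism.

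However, the claim that $\Phi_t = B_t\|v_t-\theta^*\|_{A^+A}^2 + 2A_t[F(x_t)-F(\theta^*)]$ is itself a supermartingale is not correct, and this is where your proposal has a genuine gap. The prox inequality controls $\psi_i(v_{t+1}^{(i)})$, not $\psi(x_{t+1})$. On a coordinate $j$ with $\psi_j\neq 0$ (hence $R_j=1$), the update reads
\[
x_{t+1}^{(j)} \;=\; \Big(1-\tfrac{\alpha_t\beta_t}{p_i}\Big)y_t^{(j)} \;+\; \tfrac{\alpha_t}{p_i}\,v_{t+1}^{(j)} \;-\; \tfrac{\alpha_t(1-\beta_t)}{p_i}\,v_t^{(j)},
\]
and the coefficient on $v_t$ is \emph{negative}, so $x_{t+1}$ is not a convex combination of $y_t,v_{t+1},v_t$. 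Consequently you cannot pass from $\psi_i(v_{t+1}^{(i)})$ to $\psi(x_{t+1})$ by a one-step convexity argument, and $\mathbb{E}[\Phi_{t+1}\mid\mathcal{F}_t]\le\Phi_t$ does not follow from your three ingredients.

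The paper (following \citet{lin2015accelerated}) resolves this by introducing a surrogate $\hat\psi_t\ge\psi(x_t)$ defined as $\hat\psi_t=\sum_{l\le t}\delta_l^{(t)}\psi(v_l)$, where one proves recursively that $x_t=\sum_{l\le t}\delta_l^{(t)} v_l$ with all $\delta_l^{(t)}\ge 0$; the hypothesis $1-\beta_t-\alpha_t R_i/p_i>0$ is exactly what keeps these coefficients nonnegative after unfolding $y_t$ in terms of $x_t$ and $v_t$ (this is the paper's Lemma~\ref{lemma:lyapunov_psi}). The quantity that actually telescopes is the surrogate Lyapunov $B_t\|v_t-\theta^*\|_{A^+A}^2 + 2A_t[\,f(x_t)+\hat\psi_t-F(\theta^*)\,]$, and only at the end does one use $\psi(x_t)\le\hat\psi_t$ to recover the statement with $F(x_t)$. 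Your sentence about ``the convex-combination argument relating $y_t$ to $x_t$ and $w_t$'' gestures at this, but it is a multi-step recursion on the full history $(v_l)_{l\le t}$, not a one-step relation, and without the surrogate $\hat\psi_t$ the telescoping cannot close.
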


The proof of Theorem~\ref{thm:gen_apcg}, as well as its guarantees in the non-strongly convex case are presented in Appendix~\ref{app:generalized_apcg}.

\subsection{ADFS}

\begin{algorithm}
\caption{ADFS}
\label{algo:sc_adfs}
\begin{algorithmic}
\STATE $x_0 = y_0 = v_0 = z_0 = 0^{(n + nm) \times d}$, $t = 0$
\WHILE{$t < T$}
\STATE $y_t = \frac{1}{1 + \rho}\left(x_t + \rho v_t\right)$
\STATE Sample $(i,j)$ with probability $p_{i,j}$
\STATE $v_{t+1} = z_{t+1} = (1 - \rho) v_t + \rho y_t - \tilde{\eta}_{ij} W_{ij}\Sigma^{-1}y_t$
\IF{$(i,j)$ is a computation edge}
\STATE $v_{t+1}^{(j)} = {\rm prox}_{\tilde{\eta}_{ij} \tilde{f}^*_{i,j}}\left(z_{t+1}^{(i,j)}\right)$
\STATE $v_{t+1}^{(i)} = z_{t+1}^{(i)} + z_{t+1}^{(i,j)} - v_{t+1}^{(i,j)}$
\ENDIF
\STATE $x_{t+1} = y_t + \frac{\rho R_{ij}}{p_{ij}}(v_{t+1} - (1 - \rho) v_t - \rho y_t)$
\ENDWHILE
\STATE \textbf{return} $\theta_t = \Sigma^{-1}v_t$
\end{algorithmic}
\end{algorithm}

We can now use Algorithm~\ref{algo:generalized_apcg} to solve the unconstrained optimization problem~\eqref{eq:dual_problem}. In the strongly convex setting, this yields the ADFS algorithm presented in Algorithm~\ref{algo:sc_adfs}, where  $\tilde{\eta}_{ij} = \frac{\rho \mu_{ij}^2}{\sigma_A p_i}$, $\rho$ is picked as in Theorem~\ref{thm:gen_apcg} and $W_{ij} \in \mathbb{R}^{n \times n}$ is the matrix such that $W_{ij} = (e_i - e_j)(e_i - e_j)^T$. Note that all the strong convexity has been relocated to the quadratic term. Proximal updates are therefore performed on $\tilde{f}^*_{i,j}: x \rightarrow f_{i,j}^*(x) - \frac{1}{2L_{j}}\|x\|^2$ rather than on $f_{i,j}^*$.
Going from generalized APCG to the distributed formulation actually requires several steps. In particular, it requires switching from edge variables to node variables by multiplying by $A$ on the left (and slightly more complicated manipulations to handle the proximal term). It also requires choosing the initial parameters of the sequences $A_t$ and $B_t$ so that $\alpha_t$ and $\beta_t$ are constant in order to have a simpler expression for the  algorithm. These manipulations are detailed in Appendix~\ref{app:algo_derivations}. Theorem~\ref{thm:rate_adfs} gives the rate of convergence of Algorithm~\ref{algo:sc_adfs}.

\begin{theorem}
\label{thm:rate_adfs}
Let $S$ be such that:
\begin{equation}
\label{eq:S}
    S^{-2} = \min_{ij} \frac{1}{\Sigma_i^{-1} + \Sigma_j^{-1}} \frac{p_{ij}^2}{\mu_{ij}^2 e_{ij}^T A^+A e_{ij}} ,
\end{equation}
and $\rho$ such that $\rho^2 = \lambda_{\min}^+ (A^T \Sigma^{-1} A ) S^{-2}$. Then $\theta_t$ and $x_t$ as output by Algorithm~\ref{algo:sc_adfs} verify:
\begin{align*}
\begin{split}
    c_1  &\esp{\|\theta_t - \theta^*\|^2} + 2 \left[\esp{F^*_A(A^+ x_t)} - F^*(\theta^*_A)\right]
     \leq \left[\sigma_A\|\theta^*_A\|^2 + 2\left(F^*_A(0) - F^*(\theta^*_A)\right)\right] (1 - \rho)^t,
\end{split}
\end{align*}
with $c_1 = \sigma_A\lambda_{\max}^+(A^T\Sigma^{-2}A)^{-1}$ and $F_A^* = q_A + \psi$. Note that $\theta^*$ is the minimizer of the primal function $F$ and $\theta^*_A$ is the minimizer of the dual function $F^*$.
\end{theorem}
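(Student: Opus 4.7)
The plan is to apply Theorem~\ref{thm:gen_apcg} (generalized APCG) to the dual problem~\eqref{eq:dual_problem} with $f = q_A$ and the separable $\psi = \sum_{i,j}\psi_{i,j}$, and then to transport the resulting edge-variable guarantee through the primal-dual map $\theta = \Sigma^{-1} A \lambda$ to obtain the stated bound on $\esp{\|\theta_t-\theta^*\|^2}$.

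First I verify the hypotheses of Theorem~\ref{thm:gen_apcg}. Since $q_A(x) = \tfrac{1}{2} x^T A^T \Sigma^{-1} A x$ and $A^T \Sigma^{-1} A$ shares its range with the projector $A^+A$, the function $q_A$ is $\sigma_A$-strongly convex in the $A^+A$ semi-norm with $\sigma_A = \lambda_{\min}^+(A^T \Sigma^{-1} A)$, and its coordinate smoothness in direction $(i,j)$ is $M_{ij} = e_{ij}^T A^T \Sigma^{-1} A e_{ij} = \mu_{ij}^2(\Sigma_i^{-1}+\Sigma_j^{-1})$; taking $S^2 \geq M_{ij} R_{ij}/p_{ij}^2$ for every edge recovers~\eqref{eq:S} and the strongly convex case of Theorem~\ref{thm:gen_apcg} then dictates $\rho = \sqrt{\sigma_A}\,S^{-1}$. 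The separability condition ``$\psi_{i,j}=0$ whenever $R_{ij}<1$'' holds by the augmented-graph construction: a computation edge is a bridge to a tip and therefore satisfies $R_{ij}=1$, whereas communication edges can belong to cycles (so $R<1$) but carry no $\psi$ term because only quadratic terms sit at the centers. Instantiating Theorem~\ref{thm:gen_apcg} with $v_0=x_0=0$ and $A_t/B_t = \sigma_A^{-1}$, and multiplying through by $\sigma_A$, gives for the APCG edge-iterates $\tilde v_t,\tilde x_t$
\begin{equation*}
\sigma_A \esp{\|\tilde v_t - \theta_A^*\|^2_{A^+A}} + 2\bigl[\esp{F_A^*(\tilde x_t)} - F_A^*(\theta_A^*)\bigr] \leq (1-\rho)^t \bigl[\sigma_A \|\theta_A^*\|^2 + 2(F_A^*(0) - F_A^*(\theta_A^*))\bigr].
\end{equation*}

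To move to Algorithm~\ref{algo:sc_adfs} I use the change of variables $v_t = A\tilde v_t$, $x_t = A\tilde x_t$ carried out in Appendix~\ref{app:algo_derivations}; because $A A^+ A = A$, the APCG iterates can be kept in $\mathrm{range}(A^T)$, so $\tilde v_t = A^+ v_t$, $\tilde x_t = A^+ x_t$, and the function-value term rewrites as $F_A^*(A^+ x_t)$ as in the statement. For the distance term, stationarity of the Lagrangian of~\eqref{eq:primal_constrained} identifies $\theta^*$ (on the center block) with $\Sigma^{-1} A \theta_A^*$ up to sign, matching the ADFS recovery rule $\theta_t = \Sigma^{-1} v_t$; combined with the corresponding proximal identity on the tip coordinates this gives $\theta_t - \theta^* = \Sigma^{-1} A(\tilde v_t - \theta_A^*)$. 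Since $A^T \Sigma^{-2} A$ is supported on $\mathrm{range}(A^+A)$,
\begin{equation*}
\|\theta_t - \theta^*\|^2 = (\tilde v_t - \theta_A^*)^T A^T \Sigma^{-2} A (\tilde v_t - \theta_A^*) \leq \lambda_{\max}^+(A^T \Sigma^{-2} A)\, \|\tilde v_t - \theta_A^*\|^2_{A^+A}.
\end{equation*}
Dividing by $\lambda_{\max}^+(A^T \Sigma^{-2} A)$ and substituting into the APCG inequality yields the claim with $c_1 = \sigma_A/\lambda_{\max}^+(A^T \Sigma^{-2} A)$.

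The main obstacle will be the bookkeeping of the edge-to-node substitution together with the proximal step: one has to verify that the ADFS update $v_{t+1} = (1-\rho)v_t + \rho y_t - \tilde\eta_{ij} W_{ij}\Sigma^{-1} y_t$, followed by the tip prox on $\tilde f^*_{i,j}$ and the correction $v_{t+1}^{(i)} = z_{t+1}^{(i)} + z_{t+1}^{(i,j)} - v_{t+1}^{(i,j)}$, is exactly one coordinate step of APCG on $q_A+\psi$ after left-multiplication by $A$, with step sizes $\tilde\eta_{ij}$ and $\rho R_{ij}/p_{ij}$ correctly rescaled and the relation $\psi_{i,j}(x) = \tilde f^*_{i,j}(-\mu_{ij} x)$ used to convert $\mathrm{prox}_{\eta_i \psi_i}$ into $\mathrm{prox}_{\tilde\eta_{ij}\tilde f^*_{i,j}}$. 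Once this equivalence is verified (in Appendix~\ref{app:algo_derivations}), the three ingredients above combine to give the stated rate.
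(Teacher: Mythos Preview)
Your proposal is correct and follows essentially the same route as the paper: verify the strong-convexity/smoothness constants of $q_A$ and the separability condition (your bridge argument is exactly Lemma~\ref{lemma:acrossa}), apply Theorem~\ref{thm:gen_apcg} in the strongly convex regime, then pass from edge variables to node variables via $v_t=A\tilde v_t$, identify $\theta^*=\Sigma^{-1}A\theta_A^*$, and bound $\|\Sigma^{-1}A(\tilde v_t-\theta_A^*)\|^2$ by $\lambda_{\max}^+(A^T\Sigma^{-2}A)\|\tilde v_t-\theta_A^*\|^2_{A^+A}$. The only cosmetic difference is that the paper writes the function-value transfer as $F^*(\tilde x_t)=F^*(A^+A\tilde x_t)=F_A^*(A^+x_t)$ rather than via ``iterates stay in $\mathrm{range}(A^T)$'', and it packages the edge-to-node prox rewriting separately in Appendix~\ref{app:algo_derivations} just as you defer it.
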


We would like to insist again on the fact that the variables of Algorithm~\ref{algo:sc_adfs} are node variables in $\mathbb{R}^{(n + nm) \times d}$. They are obtained by multiplying the dual variables given by Algorithm~\ref{algo:generalized_apcg} by $A$ on the left. We recall that the matrix $\Sigma$ is a diagonal matrix defined at the end of Section~\eqref{sec:model}. The implementation of Algorithm~\ref{algo:sc_adfs} requires handling several technical details, that we present below.

\paragraph{Sparse updates.}
Although Algorithm~\ref{algo:sc_adfs} is written with full-dimensional updates, it is actually possible to implement it efficiently. Indeed, $W_{ij}$ is a very sparse matrix so only the following situations can happen:
\begin{itemize}
    \item If $(i,j) \in E^{\rm comp}$, then only the parameters of the center node $i$ and its $j$-th virtual node are updated, meaning that the update is purely local.
    \item If $(i,j) \in E^{\rm comm}$, then the update is simply a weighted difference of the parameters of nodes $i$ and $j$. Thus, it only requires nodes $i$ and $j$ to exchange parameters and sum them appropriately.
\end{itemize}
Note that the parameters of the nodes are only needed when they actually perform an update. Otherwise, $v_{t+1}$ and $y_{t+1}$ are obtained by simply mixing $v_t$ and $y_t$. Therefore, nodes can simply store how many updates have been done in total since their last update and perform them all at once before they perform the update. This is a distributed version of the efficient iterations introduced by~\citet{lee2013efficient}.

\paragraph{Primal proximal step.}
Algorithm~\ref{algo:sc_adfs} uses proximal steps performed on $\tilde{f}^*_{i,j}: x \rightarrow f_{i,j}^*(x) - \frac{1}{2L_{j}}\|x\|^2$ instead of $f_{i,j}$. Yet, ${\rm prox}_{\eta \tilde{f}^*_{i,j}}$ can be expressed directly from ${\rm prox}_{\eta f_{i,j}}$, which can be easily evaluated for many objective functions. The exact derivations are presented in Appendix~\ref{app:algo_derivations_primal_prox}. 

\paragraph{Linear case.}
For many standard machine learning problems, $f_{i,j}(\theta) = \ell(X_{i,j}^T\theta)$ with $X_{i,j} \in \mathbb{R}^d$. This implies that $f_{i,j}^*(\theta) = + \infty$ whenever $\theta \notin {\rm Vec}\left(X_{i,j}\right)$. Therefore, the proximal steps on the Fenchel conjugate only have support on $X_{i,j}$, meaning that the parameters of the local nodes only have support on~$X_{i,j}$. In this case, proximal updates are one-dimensional problems that can be solved very quickly using for example the Newton method when no analytical solution is available. Warm starts (initializing on the previous solution) can also be used for solving the local problems even faster so that in the end, the cost of performing a single one-dimensional proximal update is very small. Finally, storing only one scalar coefficient per virtual node is enough to implement the algorithm in this setting, thus greatly reducing the memory footprint of ADFS.

\paragraph{Shared schedule.}
Similarly to \citet{hendrikx2018accelerated}, we require all nodes to sample the same sequence of edges to update in order to implement Algorithm~\ref{algo:sc_adfs} even though they only actively take part in a small fraction of the updates. To generate this shared schedule, all nodes are given a seed and the sampling probabilities of all edges. This also allows them to precisely know how many contraction updates between $v_t$ and $y_t$ they need to perform before their next actual update.

\subsection{Non-smooth setting}
APCG can be applied to the problem of Equation~\eqref{eq:dual_problem} even if function $q_A$ is not strongly convex. Therefore, taking different values of $\alpha_t$, $\beta_t$, $A_t$ and $B_t$ leads to a formulation of ADFS that provides error guarantees even when primal functions $f_{i,j}$ are non-smooth.
\begin{theorem}
\label{thm:adfs_non_smooth}
If $F$ is non-smooth and $\tilde{\rho}^2 = \lambda_{\min}^+(A^T A) S^{-2}$ then NS-ADFS guarantees:
\begin{equation*}
 \mathbb{E}[F^*(Ax_t)] - F^*(A\theta^*) \leq 2(\tilde{\rho}t)^{-2}\|Av_0 - A\theta^*\|^2_{A^+A},
\end{equation*}
\end{theorem}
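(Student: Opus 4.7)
The plan is to mirror the derivation of Theorem \ref{thm:rate_adfs}, specializing Theorem \ref{thm:gen_apcg} to its non-strongly-convex regime. When the primal functions $f_{i,j}$ are non-smooth, their Fenchel conjugates $f_{i,j}^*$ lose strong convexity, so the reformulation of Section \ref{sec:model} that absorbed the $L_j^{-1}$ terms into the quadratic is no longer available. We therefore apply Theorem \ref{thm:gen_apcg} to Problem \eqref{eq:dual_problem} with $\sigma_A = 0$, treating $q_A$ as merely convex and smooth in the seminorm $A^+A$.

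First I would set up the scalar sequences. With $\sigma_A = 0$, the recursion $B_{t+1} = B_t + \sigma_A a_{t+1}$ keeps $B_t \equiv B_0$, while $A_{t+1} = A_t + a_{t+1}$ with $a_{t+1}^2 S^2 = A_{t+1} B_0$. A standard induction on $\sqrt{A_t}$ gives $A_t \geq B_0 t^2 / (4 S^2)$, the parameter $\beta_t = \sigma_A a_{t+1}/B_{t+1}$ collapses to zero, and $\alpha_t = a_{t+1}/A_{t+1}$ follows the familiar $O(1/t)$-decaying schedule. Dropping the nonnegative $B_t \|v_t - \theta_A^*\|^2_{A^+A}$ term from the conclusion of Theorem \ref{thm:gen_apcg} then produces, for the edge-space APCG iterates,
\begin{equation*}
\mathbb{E}[F_A^*(x_t)] - F_A^*(\theta_A^*) \leq \frac{B_0 \|v_0 - \theta_A^*\|^2_{A^+A}}{2 A_t} \leq \frac{2 S^2}{t^2} \|v_0 - \theta_A^*\|^2_{A^+A}.
\end{equation*}

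Next I would translate this edge-space bound to node-space iterates via the change of variables $\tilde{x}_t = A x_t$, $\tilde{v}_t = A v_t$ used in Appendix \ref{app:algo_derivations} to obtain Algorithm \ref{algo:sc_adfs}; this produces NS-ADFS as the non-strongly-convex twin of Algorithm \ref{algo:sc_adfs}. The only quantity still to convert is $\|v_0 - \theta_A^*\|^2_{A^+A}$: since the APCG iterates and the optimum $\theta_A^*$ lie in $\mathrm{Im}(A^T)$, on which $A^+A$ acts as the identity and the restriction of $A^T A$ has smallest nonzero eigenvalue $\lambda_{\min}^+(A^T A)$, we have $\|v_0 - \theta_A^*\|^2_{A^+A} \leq \lambda_{\min}^+(A^T A)^{-1} \|A v_0 - A \theta^*\|^2_{A^+A}$, and substituting $\tilde{\rho}^2 = \lambda_{\min}^+(A^T A) S^{-2}$ yields the announced bound $2(\tilde{\rho} t)^{-2} \|A v_0 - A \theta^*\|^2_{A^+A}$.

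The main obstacle will be verifying that the separability hypothesis "$\psi_i = 0$ whenever $R_i < 1$" of Theorem \ref{thm:gen_apcg} still holds: by construction of the augmented graph, proximal updates only touch tip-node coordinates, which satisfy $R_{ij} = 1$, so the hypothesis is inherited from the strongly convex derivation. A secondary technicality is that the step size $\tilde{\eta}_{ij}$ and the mixing coefficients in NS-ADFS become time-dependent (driven by $a_{t+1}/(B_{t+1} p_{ij})$ rather than the constant $\rho/(\sigma_A p_{ij})$ of Algorithm \ref{algo:sc_adfs}), so the efficient-iteration trick that accumulates contractions between actual updates has to be re-derived with these polynomial schedules replacing the geometric ones.
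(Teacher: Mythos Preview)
Your proposal is correct and follows essentially the same approach as the paper: set $\sigma_A=0$ so that $B_t$ is constant and $\beta_t=0$, obtain the standard growth $A_t\ge t^2/(4S^2)$, apply Theorem~\ref{thm:gen_apcg}, and then translate the edge-space estimate to node variables via left-multiplication by $A$, with the $\lambda_{\min}^+(A^TA)$ factor arising from bounding $\|v_0-\theta_A^*\|_{A^+A}^2$ by $\|A(v_0-\theta_A^*)\|^2$. Your explicit verification of the separability hypothesis via $R_{ij}=1$ on virtual edges (the content of Lemma~\ref{lemma:acrossa}) and your remark on the now time-varying step sizes are both apt and match what the paper does in Appendix~\ref{app:non_smooth_adfs}.
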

The guarantees provided by Theorem~\ref{thm:adfs_non_smooth} are weaker than in the smooth setting. In particular, we lose linear convergence and get the classical  accelerated sublinear $O(1/t^2)$ rate. We also lose the bound on the primal parameters--- recovering primal guarantees is beyond the scope of this work. The exact formulation of NS-ADFS as well as its derivation are presented in Appendix~\ref{app:algo_derivations}.

\section{Performances}
\label{sec:perfs}

Theorem~\ref{thm:rate_adfs} gives bounds on the expected error on the primal parameter after a given number of iterations. To assess the actual speed of the algorithm, we have to take into account that iterations can either be local or with a neighbor in the graph. These two types of iterations have different costs and take different times. Besides, increasing the number of communications causes nodes to wait more for their neighbors, thus decreasing the speed of the algorithm.

\subsection{Average time per iteration}
Executing the shared schedule mentioned in Section~\ref{sec:alg} means that some nodes may need to wait for an answer from their neighbors before they can perform local updates. This introduces a synchronization cost that we need to control. To do so, we bound the probability that a random schedule of fixed length exceeds a given execution time. Queuing theory~\citep{baccelli1992synchronization} is a tool of choice to obtain such bounds, and this problem can be cast as a fork-join queuing network with blocking~\citep{zeng2018throughput}. In particular, this theory tells us that the average time per step exists, so that synchrony does not cause the system to be slower and slower. Unfortunately, existing quantitative results are not precise enough for our purpose so we generalize the method introduced by~\citet{hendrikx2018accelerated} to get a bound on the average execution time. While their result is valid when the only possible operation is communication along edges, we extend it to the case in which nodes can also perform local computations. In particular, we define $p^{\max}_{\rm comm} = n \max_i \sum_{j \in \mathcal{N}(i)}p_{ij}$ where $\mathcal{N}(i)$ are the neighbors of node $i$ in the communication graph.
Then, the following theorem holds:

\begin{theorem}
\label{thm:synchronization_cost}
Let $T(t)$ be the time needed to execute a schedule of size $t$. If all nodes perform local computations with probability $p_{\rm comp} / n$ with $p_{\rm comp} > \bar{p}_{\rm comm}$ or if $\tau > 1$ then, $\mathbb{P}\left( T(t) \geq \nu t \right) \rightarrow 0$ as $t \rightarrow \infty$ with $C < 24$ and

\begin{equation}
    \nu = n^{-1}C\left(p_{\rm comp} + \tau p^{\max}_{\rm comm}\right).
\end{equation}
\end{theorem}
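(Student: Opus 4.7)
The plan is to model the parallel execution of the shared schedule as a fork-join queuing network on the augmented graph $G^+$, extending the argument of \citet{hendrikx2018accelerated} to accommodate local computation edges in addition to communication edges.

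First I would set up completion-time variables: for each node $i$ and schedule index $t$, let $T_i(t)$ be the time at which node $i$ has finished participating in the first $t$ entries of the shared schedule, with $T(t) = \max_i T_i(t)$. Conditional on the shared schedule $(e_1,e_2,\ldots)$, these satisfy the recursion
\begin{equation*}
T_i(t) = T_i(t-1) + \mathbf{1}_{i \in e_t}\, s_{e_t} + \mathbf{1}_{e_t \in E^{\rm comm},\, i \in e_t}\, \max\!\bigl(0,\,T_j(t-1) - T_i(t-1)\bigr),
\end{equation*}
where $s_{e_t}\in\{1,\tau\}$ depending on whether $e_t$ is a computation or communication edge and $j$ is the partner of $i$ in $e_t$. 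Ignoring the $\max$ term, the per-step expected increment of $T_i$ is $n^{-1}(p_{\rm comp} + \tau\, n\sum_{j\in\mathcal{N}(i)} p_{ij})$, whose maximum over $i$ is exactly $n^{-1}(p_{\rm comp} + \tau p^{\max}_{\rm comm})$; this gives the target rate up to the constant $C$.

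The main technical step is to absorb the additional synchronization delays produced by the $\max$ terms into a constant factor. I would introduce a desynchronization potential such as $\Phi(t) = \max_i T_i(t) - \min_i T_i(t)$ (or a smoother node-wise variant $\sum_i (T(t)-T_i(t))$) and show that $\mathbb{E}[\Phi(t)]$ remains bounded uniformly in $t$ under either of the stated conditions, $p_{\rm comp} > \bar{p}_{\rm comm}$ or $\tau > 1$. The intuition is self-correcting: whenever one node gets far ahead, the next edge involving it has non-negligible probability of requiring a partner still behind and therefore blocks, while local computation events provide each node with an always-unblocked way to make progress. The condition on $p_{\rm comp}$ versus $\bar{p}_{\rm comm}$ is precisely what makes the local workload dominate the communication workload, guaranteeing negative drift of $\Phi$ away from a bounded regime. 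Once $\sup_t \mathbb{E}[\Phi(t)]$ is controlled, summing the recursion gives $\mathbb{E}[T(t)] \leq \nu t$ with the explicit $\nu$ claimed, and the explicit constant $C<24$ follows by tracking the worst-case amplification in the drift bound.

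The hardest part will be this Foster--Lyapunov-style drift bound for $\Phi$, because cascaded blocking in fork-join networks is notoriously delicate: a single delayed node can propagate delays along a chain of communication edges, and one must rule out pathological schedule patterns that trigger long cascades. I would handle this by coupling the actual process with a dominating process in which communication times are replaced by their worst-case value $\tau$ regardless of partner state, verifying that this dominating process has the desired linear growth, then bounding the gap via the bounded-potential argument above. Once $\mathbb{E}[T(t)]/t \to \nu$ is established, $\mathbb{P}(T(t) \geq \nu t) \to 0$ follows from Markov's inequality applied to $T(t) - \mathbb{E}[T(t)]$ together with the bounded-increments martingale concentration given by $|T_i(t) - T_i(t-1)| \leq \tau$.
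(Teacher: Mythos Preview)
Your plan diverges substantially from the paper's and contains a real gap. The paper never attempts to control a desynchronization potential. Instead it introduces auxiliary counting variables $X^t(i,w)$ (essentially the number of dependency chains of cumulative cost $w$ ending at node $i$ after $t$ schedule steps), proves the identity $T_i(t)=\max\{w:X^t(i,w)>0\}$, and then uses a union bound together with Markov's inequality to get $\mathbb{P}(T(t)\ge\nu t)\le\sum_{w\ge\nu t}\sum_i\mathbb{E}[X^t(i,w)]$. Because the recursion for $X^t$ is linear, the generating function of its expectation factorizes as $(\phi^1(z))^t$; the paper dominates this by a product of two compound-binomial generating functions (one for communication, one for computation) and applies a Chernoff bound to each factor separately. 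The explicit constants $\nu_c=6\cdot 4\bar p_{\rm comm}\tau$ and $\nu_l=9\,\bar p_{\rm comp}$ drop out of this Chernoff computation, which is where $C<24$ comes from.

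Concretely, two steps in your outline would fail. First, your ``dominating process in which communication times are replaced by their worst-case value $\tau$'' does not dominate the right thing: the term that can blow up is the waiting time $\max(0,T_j(t-1)-T_i(t-1))$, not the service time, so fixing the latter at $\tau$ leaves the cascade mechanism untouched. Second, the bounded-increments claim $|T_i(t)-T_i(t-1)|\le\tau$ is simply false for communication edges, since $T_i(t)=\max(T_i(t-1),T_j(t-1))+\tau$ can jump by $\Phi(t-1)+\tau$; so any Azuma-type concentration step would itself hinge on the uniform bound on $\Phi$ you have not yet established. More broadly, showing negative drift of $\Phi(t)=\max_iT_i(t)-\min_iT_i(t)$ in a fork-join network is exactly the hard queuing-theoretic estimate the paper's path-counting/generating-function trick is designed to bypass, and even if it succeeded it is unlikely to deliver an explicit numerical constant.
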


This novel result allows us to bound the execution time of the schedule, and therefore of ADFS. The assumption $p_{\rm comp} > p_{\rm comm}$ prevents \adfs~from benefiting from \emph{network acceleration} when local objectives are not finite sums ($m=1$) and communications are cheap ($\tau < 1$), and is responsible for the $1 + \tau$ factor instead of $\tau$ in Table~\ref{fig:table_speeds}. As a consequence, MSDA will turn out to be much faster in this regime. Yet, this is an actual restriction of following a schedule that is very intuitive in the limit of $p_{\rm comm} \rightarrow 1$ and $\tau$ arbitrarily small. Consider that one node (say node $0$) starts a local update at some point. Communications are very fast compared to computations so it is very likely that the neighbors of node $0$ will only perform communication updates, and they will do so until they have to perform one with node $0$. At this point, they will have to wait until node~$0$ finishes its local computation, which can take a long time. Now that the neighbors of node $0$ are also blocked waiting for the computation to finish, their neighbors will start establishing a dependence on them rather quickly. If the probability of computing is small enough and if the computing time is large enough, all nodes will sooner or later need to wait for node $0$ to finish its local update before they can continue with the execution of their part of the schedule. In the end, only node $0$ will actually be performing computations while all the others will be waiting. This phenomenon is not restricted to the limit case presented above and the synchronization cost blows up as soon as $p_{\rm comm} > p_{\rm comp}$ and $\tau < 1$. Yet, network operations generally suffer from communication protocols overhead whereas computing a single proximal update either has a closed-form solution or is a simple one-dimensional problem. Therefore, $\tau > 1$ is not a very restrictive assumption in the finite-sum setting.  

\subsection{Speed of \adfs}
We now prove the time to convergence of \adfs~presented in Table~\ref{fig:table_speeds}, and detail the conditions under which it holds. Indeed, Section~\ref{sec:alg} presents \adfs~in full generality but the different parameters have to be chosen carefully to reach optimal speed. In particular, we have to choose the coefficients $\mu$ to make sure that the graph augmentation trick does not cause the smallest positive eigenvalue of $A^T \Sigma^{-1} A$ to shrink too much. Similarly, $S$ is defined in Equation~\eqref{eq:S} by a minimum over all edges of a given quantity. This quantity heavily depends on whether the edge is an actual communication edge or a virtual edge. One can trade $p_{\rm comp}$ for $p_{\rm comm}$ so that the minimum is the same for both kind of edges, but Theorem~\ref{thm:synchronization_cost} tells us that this is only possible as long as $p_{\rm comp} > p_{\rm comm}$. 

More specifically, we define $L = AP^{\rm comm}A^T$ the Laplacian of the communication graph, where $P^{\rm comm}$ is the projection matrix on communication edges. Then, we define $\tilde{\gamma} = \max_{ij}\frac{\lambda_{\min}^+(L)n^2}{e_{ij}^TA^+Ae_{ij}|E|^2}$, $\sigma = \max_i \sigma_i$, $\kappa_i = \sigma_i^{-1}\sum_{j \in V_i} L_j$ and $\kappa_s = \max_i \kappa_i$. We choose the probabilities of virtual edges, such that $\sum_{j \in V_i}p_{ij}$ is constant for all $i$ and such that $p_{ij} \geq p_{\rm comp} \Delta_{\rm p} (1 + L_j \sigma_i^{-1})^{\frac{1}{2}} / (n S_{\rm comp})$ for some constant $\Delta_{\rm p}$ and $S_{\rm comp} = n^{-1}\sum_{i=1}^n\sum_{j \in V_i} (1 + L_j \sigma_i^{-1})^{\frac{1}{2}}$. When $(i,j)$ is a communication edge, we further assume that $p_{ij} \geq \Delta_{\rm p}  p_{\rm comm} / |E|$ and $p^{\max}_{\rm comm} \leq c_\tau p_{\rm comm}$ for some $c_\tau > 0$. Finally, we define $r_\kappa = \frac{1 + \min_i \kappa_i}{1 + \kappa_s}$.
\begin{theorem}
\label{thm:adfs_speed}
We choose $\mu_{ij} = \frac{1}{2}$ for communication edges, $\mu_{ij} = \frac{\lambda_{\min}^+(L)}{\sigma(1 + \kappa_i)}L_i$ for computation edges and $p_{\rm comm} = \min\Big(\frac{1}{2}, \left(1 + \sqrt{\frac{\tilde{\gamma}}{1 + \kappa_{min}}}S_{\rm comp}\right)^{-1}\Big)$. Then, running Algorithm~\ref{algo:sc_adfs} for $\rho^{-1}\log\left(\varepsilon^{-1}\right)$ steps guarantees an error less than $\varepsilon$ (in the sense of Theorem~\ref{thm:rate_adfs}), and the execution time $T$ is bounded by:
\begin{equation*}
    \frac{T}{\log\left(\frac{1}{\varepsilon}\right)} \leq \frac{\sqrt{2}C}{\Delta_{\rm p}}\left(\frac{m + \sqrt{m\kappa_s}}{\sqrt{r_\kappa}} + \left(1 + 2 c_\tau \tau\right)\sqrt{\frac{1 + \kappa_s}{\tilde{\gamma}}}  \right)
\end{equation*}
with probability $1$ as $\rho^{-1}\log\left(\varepsilon^{-1}\right) \rightarrow \infty$, where $C$ is the same as in Theorem~\ref{thm:synchronization_cost}.
\end{theorem}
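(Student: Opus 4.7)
Since Theorem~\ref{thm:rate_adfs} already gives linear convergence at rate $\rho$ per iteration, reaching error $\varepsilon$ requires at most $N = \rho^{-1}\log(\varepsilon^{-1})$ iterations of Algorithm~\ref{algo:sc_adfs}. Theorem~\ref{thm:synchronization_cost} asserts that $N$ iterations of the shared schedule take time at most $\nu N$ with probability tending to~$1$, where $\nu = n^{-1}C(p_{\rm comp}+\tau p^{\max}_{\rm comm})$. The whole proof is therefore an explicit computation of $\nu\rho^{-1}$ under the parameter choices of the theorem, followed by algebraic simplification to match the form of the bound.

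\textbf{Computing $S$.} I would split the minimum in~\eqref{eq:S} into computation and communication edges. For a computation edge $(i,j)$ the tip satisfies $\Sigma_j = L_j$, $\Sigma_i = \sigma_i$, and the lower bound $p_{ij}\geq p_{\rm comp}\Delta_{\rm p}(1+L_j\sigma_i^{-1})^{1/2}/(nS_{\rm comp})$ is designed precisely so that, combined with the chosen $\mu_{ij}$, the term $p_{ij}^2/[\mu_{ij}^2(\sigma_i^{-1}+L_j^{-1})e_{ij}^TA^+Ae_{ij}]$ cancels the $L_j$-dependence and leaves a quantity of order $(1+\kappa_s)/(n^2 S_{\rm comp}^2/\Delta_{\rm p}^2)$ times $p_{\rm comp}^2$; noting that $S_{\rm comp}\leq m+\sqrt{m\kappa_s}$ by Cauchy--Schwarz then produces the first summand of the bound. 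For a communication edge, $\mu_{ij}=1/2$ and the $R_{ij}$ term is bounded below in terms of $\tilde{\gamma}$ by definition; the probability lower bound $p_{ij}\geq \Delta_{\rm p}p_{\rm comm}/|E|$ then yields a term of order $p_{\rm comm}^2\tilde{\gamma}/[(1+\kappa_s)\Delta_{\rm p}^{-2}]$. The choice $p_{\rm comm}=\min\{1/2,(1+\sqrt{\tilde{\gamma}/(1+\kappa_{\min})}S_{\rm comp})^{-1}\}$ is exactly the one that balances these two contributions up to constant factors.

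\textbf{Bounding $\lambda_{\min}^+(A^T\Sigma^{-1}A)$.} This is where the choice $\mu_{ij}=\lambda_{\min}^+(L)L_i/[\sigma(1+\kappa_i)]$ on computation edges plays its role. I would eliminate the tip variables by a Schur-complement argument: the block of $A^T\Sigma^{-1}A$ corresponding to tip nodes is diagonal, and integrating them out produces an effective matrix on the $n$ centers whose off-diagonal part is the communication Laplacian weighted by the $\sigma_i^{-1}$'s (which is comparable to $L$ up to a factor of $\sigma$), and whose diagonal correction coming from the stars is of order $\sum_j \mu_{ij}^2(\sigma_i^{-1}+L_j^{-1})$. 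The specific scaling of $\mu_{ij}$ on computation edges is chosen precisely so that this correction is dominated by $\lambda_{\min}^+(L)/\sigma$, ensuring $\lambda_{\min}^+(A^T\Sigma^{-1}A)\geq c\,\lambda_{\min}^+(L)/\sigma$ for an explicit constant. Combined with the definition of $\tilde{\gamma}$, this converts the spectral term into $\sqrt{(1+\kappa_s)/\tilde{\gamma}}$ after taking $\rho^{-1}$.

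\textbf{Assembling the bound.} With $S$ and $\lambda_{\min}^+(A^T\Sigma^{-1}A)$ bounded, $\rho^{-1}$ is controlled. Using $p^{\max}_{\rm comm}\leq c_\tau p_{\rm comm}$ and the fact that the chosen $p_{\rm comm}$ is $O(1)$ in the balanced regime, $\nu$ is bounded by $n^{-1}C(p_{\rm comp}+\tau c_\tau p_{\rm comm})$, and $p_{\rm comp}=1-p_{\rm comm}$ simplifies to a constant times the relevant weights. Plugging into $T\leq \nu\rho^{-1}\log(\varepsilon^{-1})$, distributing, and using $S_{\rm comp}/\sqrt{r_\kappa}\lesssim (m+\sqrt{m\kappa_s})/\sqrt{r_\kappa}$ delivers exactly the stated expression, with the $1+2c_\tau\tau$ factor coming from combining the $1$ from $p_{\rm comp}$ with the $2c_\tau\tau$ from the communication contribution after the balancing step. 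The probability statement follows directly from Theorem~\ref{thm:synchronization_cost} since the assumption $\tau>1$ (natural in the finite-sum regime) is used to apply it.

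\textbf{Main obstacle.} The technical heart is the Schur-complement bound on $\lambda_{\min}^+(A^T\Sigma^{-1}A)$: tracking how the virtual star extensions deform the spectrum of the physical communication Laplacian, and verifying that the prescribed $\mu_{ij}$ is the right scale to keep the deformation benign. Once this spectral lemma is in hand, the rest is bookkeeping: balancing the two terms in the minimum defining $S^{-2}$ through the choice of $p_{\rm comm}$, and pushing the $\kappa_i$--dependent factors through Cauchy--Schwarz to recognize $S_{\rm comp}$ as $m+\sqrt{m\kappa_s}$.
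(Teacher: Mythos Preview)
Your overall approach matches the paper's: bound $\lambda_{\min}^+(A^T\Sigma^{-1}A)$ via a Schur-complement (block-determinant) argument on the node-indexed matrix $\Sigma^{-1/2}AA^T\Sigma^{-1/2}$, split the minimum defining $S^{-2}$ into computation and communication edges, balance the two by the stated choice of $p_{\rm comm}$, bound $S_{\rm comp}\le m+\sqrt{m\kappa_s}$ (the paper uses concavity of the square root; your Cauchy--Schwarz works equally well), and assemble with Theorem~\ref{thm:synchronization_cost}.

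One substantive imprecision is worth flagging: your spectral lemma overshoots. You claim $\lambda_{\min}^+(A^T\Sigma^{-1}A)\ge c\,\lambda_{\min}^+(L)/\sigma$, but the correct bound carries an extra $(1+\kappa_s)^{-1}$, namely $\lambda_{\min}^+(A^T\Sigma^{-1}A)\ge \lambda_{\min}^+(L)/\bigl(2\sigma(1+\kappa_s)\bigr)$, and this factor is essential for the final statement. The Schur complement of the tip block does \emph{not} leave a diagonal correction of size $\sum_j\mu_{ij}^2(\sigma_i^{-1}+L_j^{-1})$ that is merely ``dominated by $\lambda_{\min}^+(L)/\sigma$'': at $\lambda=0$ the correction vanishes exactly (that is precisely what the choice of $\mu_{ij}$ achieves), but its slope in $\lambda$ is of order $1+\kappa_i$, so solving the implicit equation for the smallest eigenvalue gives $\lambda^*\gtrsim \nu/(1+\kappa_i)$ with $\nu=\lambda_{\min}^+(L)/\sigma$, not $\lambda^*\gtrsim\nu$. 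Relatedly, your description of the computation-edge contribution to $S^{-2}$ is off: what actually emerges is a factor $\sigma(1+\kappa_i)/\lambda_{\min}^+(L)$ (not $1+\kappa_s$), and it is only after multiplying by $\lambda_{\min}^+(\tilde L)$ that $\lambda_{\min}^+(L)/\sigma$ cancels and the ratio $(1+\kappa_i)/(1+\kappa_s)$ produces the $r_\kappa$ in the final bound. If you had used your stated eigenvalue bound, your $\rho_{\rm comp}$ would be too large by a factor $\sqrt{1+\kappa_s}$ and the two regimes would not balance as claimed.
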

The proof of Theorem~\ref{thm:adfs_speed} can be found in Appendix~\ref{app:algo_perfs}. When the graph is regular (the difference between the minimum and maximum degree nodes is bounded), $\tilde{\gamma} = \grando{\gamma}$.
Besides, we generally have $r_\kappa = \grando{1}$ and $\Delta_{\rm p} = \grando{1}$ when nodes sample their data from the same distribution and $m$ is large, yielding the rate of Table~\ref{fig:table_speeds}. Otherwise, $\sigma_i$ and sampling probabilities may be adapted to recover good guarantees, but it is beyond the scope of this paper. Note that taking computing probabilities exactly equal for all nodes is not necessary to ensure convergence.

\section{Experiments}
\label{sec:experiments}

\begin{figure*}
\centering
\begin{subfigure}{.33\textwidth}
  \centering
  \includegraphics[width=1.05\linewidth]{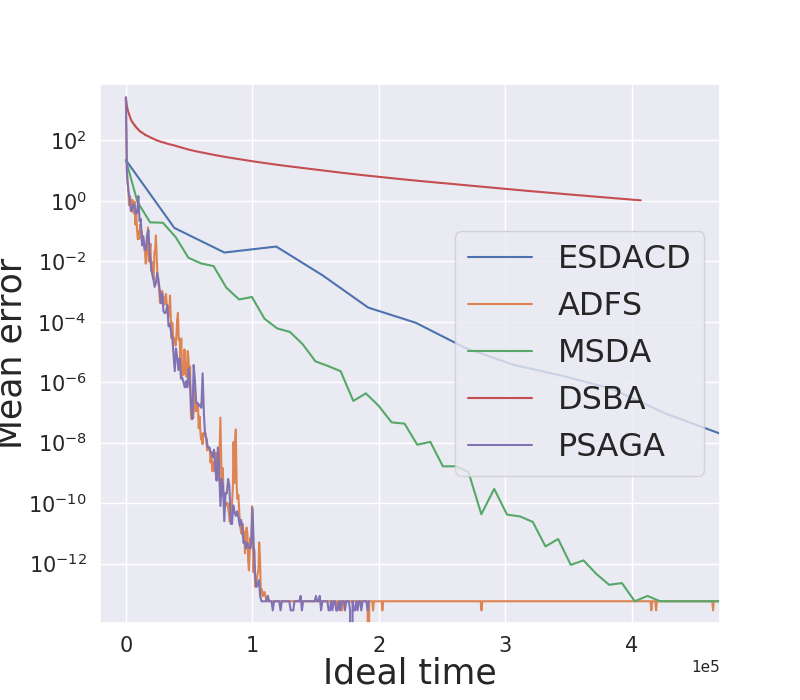}
  \vspace{-15pt}
\caption{Small network}
\label{fig:2x2_tau1_m1000}
\end{subfigure}%
\begin{subfigure}{.33\textwidth}
  \centering
  \includegraphics[width=1.05\linewidth]{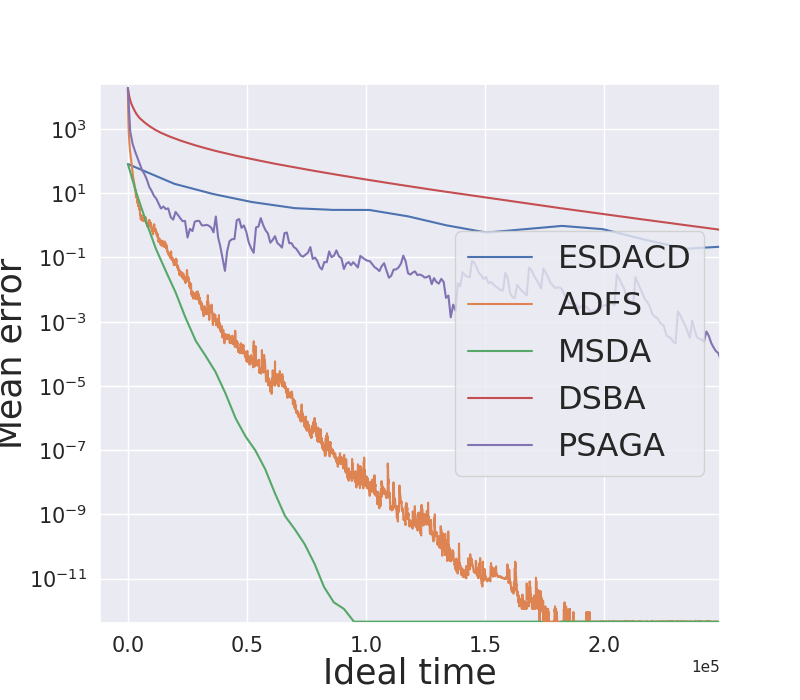}
  \vspace{-15pt}
  \caption{Large network}
\label{fig:10x10_m300}
\end{subfigure}
\begin{subfigure}{.33\textwidth}
  \centering
  \includegraphics[width=1.05\linewidth]{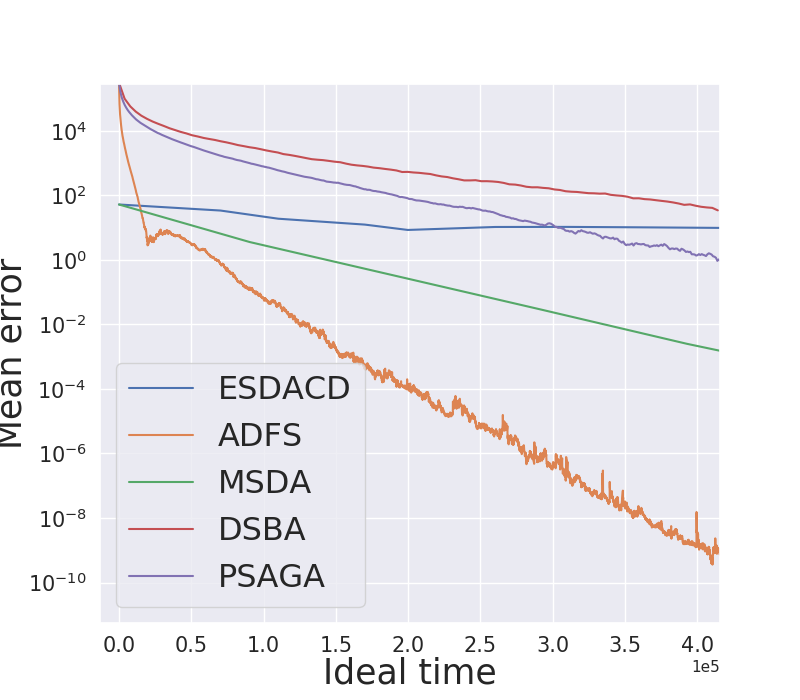}
  \vspace{-15pt}
  \caption{Large network, many samples per node}
\label{fig:7x7_tau5_m10000}
\end{subfigure}
\vspace*{-5pt}
\caption{Simulations with different network sizes and numbers of samples per node on the logistic regression task.}\vspace{-10pt}
\label{fig:test}
\end{figure*}

In this section, we illustrate the theoretical results by showing how ADFS compares with MSDA~\citep{scaman2017optimal}, the optimal distributed batch algorithm, ESDACD~\citep{hendrikx2018accelerated}, the batch version of ADFS, Point-SAGA~\citep{defazio2016simple}, a non-distributed accelerated proximal finite sum algorithm, and DSBA~\citep{shen2018towards}, a linearly converging synchronous stochastic decentralized algorithm. All algorithms were run with out-of-the-box parameters predicted by theory, except for DSBA for which the step size seemed quite conservative and was therefore multiplied by 4 (it was getting unstable after this point) to speed up convergence. We focus on a synthetic classification task in which each node $i$ has $m$ data points, denoted $X_{i,k} \in \mathbb{R}^d$ for $k \in \{1, \dots, m\}$ where $d=10$. Points are randomly drawn from a Gaussian distribution of variance $1$ centered at $-1$ for the first class and $1$ for the second class in a balanced way. The function at node~$i$ is $f_i(\theta) = \sum_{k=1}^m \log\left(1 + \exp(-y_{i,k} X_{i,k}^T\theta)\right) + \frac{\sigma_i}{2}\|\theta \|^2$. In this case, the proximal operator for one sample has no analytic solution but can be efficiently computed as the result of a one-dimensional optimization problem~\citep{shalev2013stochastic}.

In order to make comparisons easier, we assume that computing the proximal operator for $k$ samples is as fast as computing $k$ proximal operators on a single sample. Besides, we use an optimized implementation of MSDA and DSBA in which each node computes its gradient and sends it to its neighbors as soon as it has received all the gradients of the previous rounds. This avoids relying on global communication primitives and reduces nodes idle time. Experiments were run in a distributed manner on an actual computing cluster. Yet, plots are shown for \emph{idealized times} in order to abstract implementation details as well as ensure that reported timings are correct since there was no way to ensure that both the nodes and the network were not being occupied by other experiments. To emulate time, we draw delays from an exponential distribution with parameter 1 for computations, and multiply them by $\tau$ for communications. Note that nodes perform the schedule described in Section~\ref{sec:perfs} and start the next iteration as soon as they send their a gradient (non-blocking communications).

The first simulation, presented in Figure~\ref{fig:2x2_tau1_m1000} is run on a relatively fast ($\tau = 5$) and small ($2 \times 2$ grid) network, with $m=1000$ samples per nodes, and $\sigma_i = 1$. In this setting, the stochastic optimization speed-up clearly takes the lead on the distribution speed-up, so Point-SAGA beats all distributed algorithms. Yet, ADFS has a relatively low overhead so it matches the speed of Point-SAGA.  

For the second simulation, presented in Figure~\ref{fig:10x10_m300}, we run simulations on a much larger $10 \times 10$ grid but with only $300$ points per node and still $\sigma_i = 1$. In this setting, the distributed algorithms successfully leverage the extra computing power so that MSDA and ADFS beat Point-SAGA. Yet, the number of samples is not very big so MSDA is slightly faster than ADFS.

The last simulation runs on a $7 \times 7$ grid with $10^4$ samples per node and $\sigma_i = 50$, as shown in Figure~\ref{fig:7x7_tau5_m10000}. Even with such high values of $m$, MSDA is still very competitive and improves over Point-SAGA, thus showing that it can perform well beyond the $n > m$ limit suggested by the theory. Yet, ADFS gets a speed-up from both the high number of samples and the high number of computing nodes and outperforms all other methods.

Overall, DSBA and ESDACD do not perform so well in these experiments. For DSBA, we believe this is mainly due to the fact that it performs a communication step for each computation step, and we assumed communication time to be higher than computation time ($\tau=5$). Besides, it is the only non-accelerated methods. For ESDACD, it is due to the fact that batch computing times are significantly larger than communication times ($m > \tau$). 

\section{Conclusion}
In this paper, we provide a new stochastic algorithm for decentralized optimization. It leverages the finite-sum structure of the objective functions to match the rates of the best known sequential algorithms while having the network scaling of the optimal batch algorithms. It is particularly suited to the standard machine learning setting of composite optimization and also has a non-smooth formulation. 

The analysis in this paper could be extended to better handle more heterogeneous settings, both in terms of hardware (computing times, delays) and local functions (different regularities). Although considering finite sums drastically improved on this problem,
finding an asynchronous algorithm that can take advantage of arbitrarily low communication delays to remove the impact of the $\gamma$ factor on the rate is still an open question. Finally, we believe that the $\sqrt{n}$ distribution speed-up is optimal when the $\sqrt{m\kappa}$ term dominates. Proving this conjecture would be a nice addition to this work.

\bibliographystyle{abbrvnat}
\bibliography{biblio}

\newpage

\appendix 
\onecolumn

\section{Generalized APCG}
\label{app:generalized_apcg}

We study the following algorithm that starts from arbitrary $x_0, y_0, v_0$ and at each step picks a coordinate $i$ that will be updated. The algorithm is closely tied to APCG~\citep{lin2015accelerated}. We note $R_i = e_i^T A^+A e_i$. Then, we take the parameter sequences $\alpha_t, \beta_t, a_t, A_t \text{ and } B_t$ such that:

\begin{align*}
    & a_{t+1}^2 S^2 = A_{t+1} B_{t+1}, & B_{t+1} = B_t + \sigma a_{t+1}, & \ \ \ \ \ \ \ \ \ A_{t+1} = A_t + a_{t+1}, \\
    & \alpha_t = \frac{a_{t+1}}{A_{t+1}}, & \beta_t = \frac{\sigma a_{t+1}}{B_{t+1}}. & 
\end{align*}

Finally, we construct sequences $y_t, z_t, v_t \text{ and } x_t$ as in Algorithm~\ref{algo:generalized_apcg}, that we recall here:
\begin{equation}
    y_t = \frac{(1 - \alpha_t) x_t + \alpha_t(1 - \beta_t)v_t}{1 - \alpha_t \beta_t}
\end{equation}

\begin{equation}
    z_{t+1} = w_t - \frac{a_{t+1}}{B_{t+1} p_i}\nabla_i f_A(y_t)
\end{equation}
\begin{equation}
    v_{t+1} = z_{t+1}.
\end{equation}
If $\psi_i > 0$:
\begin{equation}
    \label{eq:vtplus1}
    v_{t+1}^{(i)} = \arg \min_{v}  V_i^t(v) \ \hat{=} \ \frac{B_{t+1} p_i}{2a_{t+1}}\|v - z_{t+1}^{(i)}\|^2  + \psi_i(v)
\end{equation}

\begin{equation}
\label{eq:xtplus1}
    x_{t+1} = y_t + \frac{\alpha_t R_i}{p_i}(v_{t+1} - w_t),
\end{equation}

where $w_t = (1 - \beta_t) v_t + \beta_t y_t$ and $S$ is such that $ \frac{M_i R_i}{p_i^2} \leq S^2$ for all $i$. Note that $v_{t+1}^{(j)} = w_t^{(j)}$ for $j \neq i$. We can then prove Theorem~\ref{thm:gen_apcg}. To do so, we define $\tilde{v}_{t+1}$ such that $\tilde{v}_{t+1}^{(i)} = e_i^T \arg \min_{v} V_i^t(v)$ for all $i$. Then, we give the following lemma, that we will prove later:

\begin{lemma}
\label{lemma:lyapunov_psi}
If $1 - \beta_t - \frac{\alpha_t}{p_i} \geq 0$ then for any $t$, we can write $x_t = \sum_{l=0}^t \delta^{(t)}_l v_l$ such that $\sum_{l=0}^t \delta^{(t)}_l = 1$ and for any $l$, $\delta^{(t)}_l > 0$. We define $\hat{\psi}_t = \sum_{l=0}^t \delta^{(t)}_l \psi(v_l)$. Then, if $R_i = 1$ whenever $\psi_i \neq 0$, $\psi(x_t) \leq \hat{\psi}_t$ and:
\begin{equation}
    \mathbb{E}_{i_t}\left[\hat{\psi}_{t+1}\right] \leq \alpha_t \psi(\tilde{v}_{t+1}) + (1 - \alpha_t)\hat{\psi}_t.
\end{equation}
\end{lemma}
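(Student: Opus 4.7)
The plan is to prove the three claims by simultaneous induction on $t$. Base case ($t=0$): $x_0 = v_0$, so $\delta_0^{(0)} = 1$ trivially gives a strict convex combination and $\hat\psi_0 = \psi(x_0)$.

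\emph{Convex combination representation.} For the inductive step, I would combine the $y_t$ and $w_t$ identities with the update for $x_{t+1}$ to get
\[
x_{t+1} = \Big(1-\tfrac{\alpha_t \beta_t R_i}{p_i}\Big) y_t - \tfrac{\alpha_t(1-\beta_t) R_i}{p_i}\, v_t + \tfrac{\alpha_t R_i}{p_i}\, v_{t+1},
\]
substitute $y_t = \lambda_t x_t + (1-\lambda_t) v_t$ with $\lambda_t = (1-\alpha_t)/(1-\alpha_t\beta_t)$, and plug in the induction hypothesis $x_t = \sum_l \delta_l^{(t)} v_l$. This expresses $x_{t+1}$ as a linear combination of $v_0,\ldots,v_{t+1}$ whose coefficients sum to $1$, since $x_{t+1} - y_t$ is a combination of $v_{t+1}$ and $w_t$ whose weights are equal and opposite. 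Because $v_{t+1}$ and $w_t$ agree off the sampled coordinate $i$, the bookkeeping is done coordinatewise: for $k \neq i$ no new $v_{t+1}^{(k)}$ term appears and the existing weights are simply rescaled by $\lambda_t$, with a mass $1-\lambda_t$ shifted onto $v_t^{(k)}$ (all non-negative); for $k = i$ a new positive weight $\alpha_t R_i / p_i$ is placed on $v_{t+1}^{(i)}$, and the assumption $1-\beta_t - \alpha_t R_i / p_i \geq 0$, combined with the inductive positivity of the $v_t^{(i)}$ weight, is what keeps the updated $v_t^{(i)}$ coefficient non-negative.

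\emph{Jensen bound and expected recursion.} The inequality $\psi(x_t) \leq \hat\psi_t$ follows from separability of $\psi$ and coordinatewise Jensen; the hypothesis $\psi_i = 0$ whenever $R_i < 1$ guarantees that no contribution to $\psi$ comes from coordinates where the convex-combination property might be degenerate. For the recursion, I would split $\hat\psi_{t+1}$ into a ``mixing'' part that inherits past weights via the $\lambda_t$-rescaling and a ``new prox'' part carrying weight $\alpha_t R_i / p_i$ on $v_{t+1}^{(i)} = \tilde v_{t+1}^{(i)}$ at the sampled coordinate. Taking expectation over $i$ with $\mathbb{P}(i=k) = p_k$ cancels the $p_i$ in the new-prox part and yields $\sum_k \alpha_t R_k \psi_k(\tilde v_{t+1}^{(k)}) = \alpha_t \psi(\tilde v_{t+1})$ via the $R_k = 1$ hypothesis; the mixing part collapses to $(1-\alpha_t) \hat\psi_t$ through the identity $(1-\alpha_t \beta_t)\lambda_t = 1-\alpha_t$.

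The main obstacle is the positivity check at the sampled coordinate: the negative contribution on $v_t^{(i)}$ coming from $w_t$ has magnitude $\alpha_t(1-\beta_t)/p_i$, and absorbing it requires both the assumption $1-\beta_t - \alpha_t/p_i \geq 0$ and careful propagation of the inductive lower bound on the weight of $v_t$ through the $y_t$ mixing---a bound that itself depends on the sampling history at that coordinate. Reconciling this coordinatewise tracking with the single-index notation $\delta_l^{(t)}$ of the statement, while simultaneously preserving the clean Jensen bound and the clean expected recursion, is where the bulk of the technical care will have to be concentrated.
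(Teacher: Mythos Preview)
Your coordinatewise bookkeeping is the wrong turn, and it is what creates the ``main obstacle'' you flag at the end. The weights $\delta_l^{(t)}$ in the lemma are \emph{scalars}: the full vector $v_{t+1}$ enters $x_{t+1}$ with a single scalar weight $\alpha_t R_i/p_i$, independently of the fact that only coordinate $i$ moved. There is no ``for $k\neq i$'' versus ``for $k=i$'' split in the representation of $x_{t+1}$. The paper handles positivity by \emph{strengthening} the induction hypothesis: it proves not merely $\delta_t^{(t)}>0$ but the exact formula $\delta_t^{(t)}=\alpha_{t-1}R_{i_{t-1}}/p_{i_{t-1}}$, i.e.\ the weight placed on $v_t$ at the previous step. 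With this explicit value, the negative coefficient $\frac{\alpha_t(1-\beta_t)}{1-\alpha_t\beta_t}\big(1-\tfrac{R_i}{p_i}\big)$ on $v_t$ can be combined with the positive contribution coming through the $x_t$ term, and a direct algebraic check shows the sum is nonnegative precisely under the hypothesis $1-\beta_t-\alpha_t R_i/p_i\geq 0$. Your ``careful propagation of the inductive lower bound'' is replaced by an exact equality, which is the missing idea.

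The expected-recursion step also has a gap. With scalar weights, $\hat\psi_{t+1}$ contains $\frac{\alpha_t R_i}{p_i}\psi(v_{t+1})$, and since $v_{t+1}^{(j)}=w_t^{(j)}$ for $j\neq i$ one has $\psi(v_{t+1})=\psi_i(\tilde v_{t+1}^{(i)})+\sum_{j\neq i}\psi_j(w_t^{(j)})$. Taking expectation over $i$ therefore produces not only $\alpha_t\psi(\tilde v_{t+1})$ but also an extra $\alpha_t(S_R-1)\psi(w_t)$ with $S_R=\sum_k R_k$. This surplus does not simply ``collapse to $(1-\alpha_t)\hat\psi_t$''; one must write $w_t=\frac{\beta_t(1-\alpha_t)}{1-\alpha_t\beta_t}x_t+\frac{1-\beta_t}{1-\alpha_t\beta_t}v_t$, apply convexity of $\psi$, and then invoke $\psi(x_t)\leq\hat\psi_t$. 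Only after this chain do the pieces recombine to give $(1-\alpha_t)\hat\psi_t$.
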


Note that Lemma~\ref{lemma:lyapunov_psi} is a small generalization to arbitrary sampling probabilities of the beginning of the proof in~\citep{lin2015accelerated}. We can now prove the main theorem.

\begin{proof}
The goal of this proof is to follow~\citet{nesterov2017efficiency}. To achieve this, we need to expand $\|v_{t+1} - \theta^*\|^2$. In the original proof, $v_{t+1} = w_t - g$ where $g$ is a gradient term so the expansion is rather straightforward. In our case, $v_{t+1}$ is defined by a proximal mapping so a bit more work is required. We start by showing the following equality:
\begin{align}
\label{eq:sc_prox}
\begin{split}
    \frac{B_{t+1}p_i}{2a_{t+1}} [\|v_{t+1} - \theta^*\|^2_{A^+A} + &\|v_{t+1} - w_t\|^2_{A^+A} - \|\theta^* - w_t\|^2_{A^+A}] \\
    &\leq \langle \nabla_i f_A(y_t), \theta^* - v_{t+1} \rangle_{A^+A} + \psi_i\left({\theta^*}^{(i)}\right) - \psi_i\left(v_{t+1}^{(i)}\right).
    \end{split}
\end{align}

When $\psi_i = 0$, it follows from using $v_{t+1} = w_t - \frac{a_{t+1}}{B_{t+1} p_i}\nabla_i f_A(y_t)$ and basic algebra (expanding the squared terms).

When $\psi_i \neq 0$, $A^+Ae_i = e_i$ because $e_i^TA^+Ae_i = 1$ and $A^+A$ is a projector. Therefore, we obtain 
\begin{equation}
\label{eq:vt1proj}
    \|v_{t+1} - \theta^*\|^2_{A^+A} - \|w_t - \theta^*\|^2_{A^+A} = \|v_{t+1}^{(i)} - {\theta^*}^{(i)}\|^2 - \|w_t^{(i)} - {\theta^*}^{(i)}\|^2
\end{equation}
because $v_{t+1}$ is equal to $w_t$ for coordinates other than $i$. We now use the strong convexity of $V_i^t$ at points $v_{t+1}^{(i)}$ (its minimizer) and ${\theta^*}^{(i)}$ ($i$-th coordinate of a minimizer of $F$) to write that $V_i^t(v_{t+1}^{(i)}) + \frac{B_{t+1}p_i}{2a_{t+1}}\|v_{t+1}^{(i)} - {\theta^*}^{(i)}\|^2 \leq V_i^t({\theta^*}^{(i)})$. This is a key step from the proof of~\citep{lin2015accelerated}. Then, expanding the $V_i^t$ terms yields:

\begin{align*}
    \frac{B_{t+1}p_i}{2a_{t+1}} &\left[\|v_{t+1}^{(i)} - {\theta^*}^{(i)} \|^2 + \|v_{t+1}^{(i)} - w_t^{(i)} + \frac{a_{t+1}}{B_{t+1} p_i}\nabla_i f_A(y_t)\|^2 - \|\theta^* - w_t + \frac{a_{t+1}}{B_{t+1} p_i}\nabla_i f_A(y_t)\|^2 \right]\\
    &\leq  \psi_i\left({\theta^*}^{(i)}\right) - \psi_i\left(v_{t+1}^{(i)}\right).
\end{align*}
We can now retrieve Equation~\eqref{eq:sc_prox} by pulling gradient terms out of the squares and using Equation~\eqref{eq:vt1proj}. We now evaluate each term of Equation~\eqref{eq:sc_prox}. First of all, we use Equation~\ref{eq:xtplus1} and the fact that $w_t - v_{t+1} = e_i^T(w_t - v_{t+1})$ to show:
\begin{align*}
    \esp{\frac{a_{t+1}}{p_i}\langle \nabla_i f_A(y_t), \theta^* - v_{t+1} \rangle_{A^+A}} &= a_{t+1} \esp{\langle \frac{1}{p_i}\nabla_i f_A(y_t), \theta^* - w_t\rangle_{A^+A}} + A_{t+1}\esp{\langle \nabla_i f_A(y_t), \frac{\alpha_t}{p_i}(w_t - v_{t+1}) \rangle_{A^+A}} \\
    &= a_{t+1} \langle \nabla f_A(y_t), \theta^* - w_t \rangle + A_{t+1}\esp{\langle \nabla_i f_A(y_t), y_t - x_{t+1} \rangle},
\end{align*}

where we used that $R_i = e_i^TA^+Ae_i$ and $y_t - x_{t+1} = \frac{\alpha_t R_i}{p_i}(w_t - v_{t+1})$. The rest of this proof now follows the analysis from~\citet{nesterov2017efficiency}. For the first term, we use the strong convexity of $f$ as well as the fact that $w_t = y_t - \frac{1 - \alpha_t}{\alpha_t} (x_t - y_t)$ to obtain:

\begin{align*}
a_{t+1}& \nabla f_A(y_t)^T (\theta^* - w_t) = a_{t+1} \nabla f_A(y_t)^T \left(\theta^* - y_t + \frac{1 - \alpha_t}{\alpha_t}(x_t - y_t)\right)\\
&\leq a_{t+1} \left(f_A(\theta^*) - f_A(y_t) - \frac{1}{2} \sigma \|y_t - \theta^*\|^2_{A^+A} + \frac{1 - \alpha_t}{\alpha_t}(f_A(x_t) - f_A(y_t)) \right) \\
&\leq a_{t+1} f_A(\theta^*) - A_{t+1} f_A(y_t) + A_t f_A(x_t) - \frac{1}{2} a_{t+1} \sigma \|y_t - \theta^*\|^2_{A^+A}.
\end{align*}

For the second term, we use the fact that $x_{t+1} - y_t$ has support on $e_i$ only (just like $v_{t+1} - w_t$) and the directional smoothness of $f_A$ to obtain:

\begin{align*}
    A_{t+1} \langle \nabla_i f_A(y_t), y_t - x_{t+1} \rangle &\leq A_{t+1} \left[f_A(y_t) - f_A(x_{t+1}) + \frac{M_i}{2}\|x_{t+1} - y_t\|^2\right] \\
    &\leq A_{t+1} \left(f_A(y_t) - f_A(x_{t+1})\right) +\frac{B_{t+1}}{2} \frac{L_i R_i}{p_i^2} \frac{a_{t+1}^2}{A_{t+1}B_{t+1}} R_i\|e_i^T (v_{t+1} - w_t)\|^2\\
    &\leq A_{t+1} \left(f_A(y_t) - f_A(x_{t+1})\right) +\frac{B_{t+1}}{2} \|v_{t+1} - w_t\|^2_{A^+A}
\end{align*}

Noting $\Delta f(x) = \esp{f(x)} - f(\theta^*)$ and remarking that $a_{t+1} = A_{t+1} - A_t$, we obtain, using that $\alpha_t = \frac{a_{t+1}}{A_{t+1}}$: 

\begin{align*}
    \esp{\frac{a_{t+1}}{p_i}\langle \nabla_i f_A(y_t), \theta^* - v_{t+1} \rangle_{A^+A}} \leq A_t \Delta f_A(x_t)& - A_{t+1} \Delta f_A(x_{t+1}) + \frac{B_{t+1}}{2}\esp{ \|w_t - v_{t+1}\|^2_{A^+A}}\\
    &- \frac{a_{t+1}\sigma}{2}\|y_t - \theta^*\|^2_{A^+A}.
\end{align*}

Using Lemma~\ref{lemma:lyapunov_psi}, we derive in the same way:

\begin{align*}
    \esp{\frac{a_{t+1}}{p_i} \left[\psi_i\left({\theta^*}^{(i)}\right) - \psi_i\left(v_{t+1}^{(i)}\right)\right]} &= a_{t+1}\psi(\theta^*) - A_{t+1} \alpha_t  \psi(\tilde{v}_{t+1})\\
    &\leq A_t \left(\hat{\psi}_t - \psi(\theta^*)\right) - A_{t+1}\left(\hat{\psi}_{t+1} - \psi(\theta^*) \right).
\end{align*}

Now, we can multiply Equation~\ref{eq:sc_prox} by $\frac{a_{t+1}}{p_i}$ and take the expectation over $i$. The $\|v_{t+1} - w_t\|^2_{A^+A}$ terms cancel and we obtain:

\begin{align*}
\frac{B_{t+1}}{2}\esp{\|v_{t+1} - \theta^*\|^2_{A^+A}} + A_{t+1} \Delta \hat{F}_A(x_{t+1}) \leq A_t \Delta \hat{F}_A(x_t) + \frac{B_{t+1}}{2}\|w_t - \theta^*\|^2_{A^+A} - \frac{a_{t+1}\sigma}{2}\|y_t - \theta^*\|^2_{A^+A},
\end{align*}

where $\Delta \hat{F}_A(x_t) = \Delta f_A(x_t) +  \esp{\hat{\psi}_t} - \psi(\theta^*)$. convexity of the squared norm yields $\|w_t - \theta^*\|^2_{A^+A} \leq (1 - \beta_t)\|v_t - \theta^*\|^2_{A^+A} + \beta_t \|y_t - \theta^*\|^2_{A^+A}$. Now remarking that $B_{t+1}(1 - \beta_t) = B_t$ and $a_{t+1}\sigma = B_{t+1} \beta_t$, and summing the inequalities until $t=0$, we obtain:

\begin{align*}
B_{t}\|v_{t} - \theta^*\|^2_{A^+A} + 2A_{t} \Delta \hat{F}_A(x_{t}) \leq 2A_0 \Delta F_A(x_0) + B_0 \|v_0 - \theta^*\|^2_{A^+A}.
\end{align*}

We finish the proof by using the fact that $\psi(x_t) \leq \hat{\psi}_t$. The growth of the coefficients $A_t$ and $B_t$ can then be proven by an easy induction, depending on whether $\sigma = 0$ or not.
\end{proof} 

Now, we prove Lemma~\ref{lemma:lyapunov_psi}:

\begin{proof}
This lemma is a generalization of the lemma from APCG with arbitrary probabilities (instead of just uniform ones). The proof is very similar. We start the proof by expressing $x_{t+1}$ in terms of $x_t$, $v_{t+1}$ and $v_t$:

Then,
\begin{align*}
    x_{t+1} &= y_t + \frac{\alpha_t R_i}{p_i}(v_{t+1} - w_t)\\
    &= \frac{\alpha_t R_i}{p_i}v_{t+1} + \left(1 - \frac{\alpha_t \beta_t R_i}{p_i}\right) y_t  - \frac{\alpha_t(1 - \beta_t)R_i}{p_i}v_{t}\\
    &= \frac{\alpha_t}{p_i}v_{t+1} + \left(1 - \frac{\alpha_t \beta_t R_i}{p_i}\right)\frac{(1 - \alpha_t) x_t + \alpha_t(1 - \beta_t)v_t}{1 - \alpha_t \beta_t}  - \frac{\alpha_t(1 - \beta_t)R_i}{p_i}v_{t}\\
    &= \frac{\alpha_t R_i}{p_i}v_{t+1} + \alpha_t(1 - \beta_t)\left[\frac{1 - \frac{\alpha_t \beta_t R_i}{p_i}}{1 - \alpha_t \beta_t} - \frac{R_i}{p_i} \right]v_t + \left(1 - \frac{\alpha_t \beta_t R_i}{p_i}\right)\frac{(1 - \alpha_t)}{1 - \alpha_t \beta_t}\sum_{l=0}^t \delta_l^{(t)} v_l\\
    &= \frac{\alpha_t R_i}{p_i}v_{t+1} + \frac{\alpha_t(1 - \beta_t)}{1 - \alpha_t \beta_t}\left(1 - \frac{R_i}{p_i}\right)v_t + \left(1 - \frac{\alpha_t \beta_t R_i}{ p_i}\right)\frac{(1 - \alpha_t)}{1 - \alpha_t \beta_t} x_t . 
\end{align*}

At this point, we know that all coefficients sum to 1. Indeed, they all sum to 1 at the first line and all we have done is express $w_t$ and then $y_t$ as convex combinations of other terms, thus keeping the value of the sum unchanged. Yet, $p_i < 1$ so the coefficient on the second term is negative. Therefore, we now show by recursion that for $t \geq 1$

\begin{equation}
\label{eq:xt_convex}
    x_t = \frac{\alpha_t R_i}{p_i}v_t + \sum_{l=0}^{t-1} \delta^{(t)}_l v_l,
\end{equation}

This implies that if we expand the $x_t$ term then we get a positive coefficient on $v_t$. For $t=0$, $x_0 = v_0$ and so $x_1 = \frac{\alpha_0 R_i}{p_i}v_1 + \left( 1 - \frac{\alpha_0 R_i}{p_i}\right)v_0$ because the sum is of the coefficients is equal to 1.

We now assume that Equation~\ref{eq:xt_convex} holds for a given $t > 0$.

\begin{align*}
    \delta_{t}^{(t+1)} = &\frac{\alpha_t(1 - \beta_t)}{1 - \alpha_t \beta_t}\left(1 - \frac{R_i}{p_i}\right) + \frac{\alpha_t R_i}{p_i} \left(1 - \frac{\alpha_t \beta_t R_i}{p_i}\right)\frac{(1 - \alpha_t)}{1 - \alpha_t \beta_t} \\
    &= \frac{\alpha_t}{1 - \alpha_t \beta_t}\left[(1 - \beta_t) \left(1 - \frac{R_i}{p_i}\right) + \frac{(1 - \alpha_t) R_i}{p_i}\left(1 - \frac{\alpha_t \beta_t R_i}{p_i}\right)\right]\\
    &= \frac{\alpha_t}{1 - \alpha_t \beta_t}\left[1 - \beta_t - \frac{R_i}{p_i} + \frac{\beta_t R_i}{p_i} + \frac{R_i}{p_i} - \frac{\alpha_t R_i}{p_i} - \frac{\alpha_t \beta_t R_i^2}{p_i^2} + \frac{\alpha_t^2 \beta_t R_i^2}{p_i^2}\right]\\
    &= \frac{\alpha_t}{1 - \alpha_t \beta_t}\left[\left(1 - \beta_t - \frac{\alpha_t R_i}{p_i}\right) + \frac{\beta_t R_i}{p_i}\left(1 - (1 - \alpha_t)\frac{\alpha_t R_i}{p_i}\right)\right].
\end{align*}

We conclude that $\delta_{t}^{(t+1)} > 0$ since $1 - \beta_t - \frac{\alpha_t R_i}{p_i} > 0$. We also deduce from the form of $x_{t+1}$ that for $l < t$

\begin{equation}
    \delta_l^{(t+1)} = \left(1 - \frac{\alpha_t \beta_t R_i}{p_i}\right)\frac{(1 - \alpha_t)}{1 - \alpha_t \beta_t} \delta_l^{(t)},
\end{equation}

so these coefficients are positive as well. Since they also sum to $1$, it implies that $x_t$ is a convex combination of the $v_l$ for $l < t$, which means that $\psi(x_t) \leq \hat{\psi}_t$ since $\psi$ is convex. Now, we can properly express $\hat{\psi}_{t+1}$:

\begin{align*}
    \esp{\hat{\psi}_{t+1}} &= \esp{\frac{\alpha_t R_i}{p_i}\psi(v_{t+1})} + \esp{\frac{\alpha_t(1 - \beta_t)}{1 - \alpha_t \beta_t}\left(1 - \frac{R_i}{p_i}\right) + \frac{\alpha_t R_i}{p_i} \left(1 - \frac{\alpha_t \beta_t R_i}{p_i}\right)\frac{(1 - \alpha_t)}{1 - \alpha_t \beta_t}} \psi(v_t)\\
    &+ \sum_{l=0}^{t-1} \esp{\left(1 - \frac{\alpha_t \beta_t R_i}{p_i}\right)\frac{(1 - \alpha_t)}{1 - \alpha_t \beta_t}} \delta^{(t)}_l\psi(v_l) \\
    &= \esp{\frac{\alpha_t R_i}{p_i}\psi(v_{t+1})} + \frac{\alpha_t(1 - \beta_t)}{1 - \alpha_t \beta_t}\esp{1 - \frac{R_i}{p_i}} \psi(v_t) +  \frac{(1 - \alpha_t)}{1 - \alpha_t \beta_t} \esp{1 - \frac{\alpha_t \beta_t R_i}{p_i}} \sum_{l=0}^{t} \delta^{(t)}_l \psi(v_l).
\end{align*}

We note $S_R = \esp{\frac{R_i}{p_i}} = \sum_{i=1}^n R_i$ and develop the first term:

\begin{align*}
    \esp{\frac{\alpha_t R_i}{p_i}\psi(v_{t+1})} &= \alpha_t \sum_{i=1}^n \left[R_i\psi_i(v_{t+1}^{(i)}) + R_i\sum_{j \neq i} \psi_j(w_t^{(j)})\right]\\
    &= \alpha_t \sum_{i=1}^n R_i \left[\psi_i(v_{t+1}^{(i)}) - \psi_i(w_{t}^{(i)})\right] + \alpha_t S_R \psi(w_t)\\
    &= \alpha_t \psi(\tilde{v}_{t+1}) + \alpha_t(S_R - 1)\psi(w_t),
\end{align*}

where the last line comes from the fact that either $R_i = 1$ or $\psi_i = 0$. Finally, we use the fact that $w_t = \frac{\beta_t(1 - \alpha_t)}{1 - \alpha_t\beta_t} x_t + \frac{1 - \beta_t}{1 - \alpha_t\beta_t} v_t$, the convexity of $\psi$ and the fact that $\psi(x_t) \leq \hat{\psi}_t$ to obtain:
\begin{align*}
    \esp{\hat{\psi}_{t+1}} &= \alpha_t \psi(\tilde{v}_{t+1}) + \alpha_t(S_R-1)\frac{(1 - \beta_t)}{1 - \alpha_t\beta_t}\psi(v_t) + \alpha_t(S_R-1) \frac{\beta_t(1 - \alpha_t)}{1 - \alpha_t\beta_t}\psi(x_t) \\
    &+ \frac{\alpha_t(1 - \beta_t)}{1 - \alpha_t \beta_t}\left(1 - S_R\right) \psi(v_t) +  \frac{(1 - \alpha_t)}{1 - \alpha_t \beta_t} \left(1 -  \alpha_t \beta_t S_R\right) \hat{\psi}_t\\
    &\leq \alpha_t \psi(\tilde{v}_{t+1}) + \frac{1 - \alpha_t}{1 - \alpha_t \beta_t}\left[(S_R-1)\alpha_t \beta_t + (1 - \alpha_t \beta_t S_R)\right] \hat{\psi}_t\\
    &\leq \alpha_t \psi(\tilde{v}_{t+1}) + (1 - \alpha_t)\hat{\psi}_t.
\end{align*}

This finishes the proof of the lemma.
\end{proof}

\section{Algorithm derivation}
\label{app:algo_derivations}
\subsection{From edge variables to node variables}
Taking the dual formulation implies that variables are associated with edges rather than nodes. Although it could be possible to work with edge variables, it is generally inefficient. Indeed, the algorithm needs variable $Ay_t$ instead of variable $y_t$ for the gradient computation so standard methods work directly with $Ay_t$ \citep{scaman2017optimal, hendrikx2018accelerated}.

The new update equations can be retrieved by multiplying each line of Algorithm~\ref{algo:generalized_apcg} by $A$ on the left. Yet, there is still a $z_{t+1}$ term because of the presence of the proximal update, that is written, for edge $(i,j)$:

\begin{equation}
    v_{t+1}^{(i,j)} = {\rm prox}_{\eta_{i,j} \psi_{i,j}}\left(e_{ij}^T z_{t+1}\right).
\end{equation}

Fortunately, this update only modifies $z_{t+1}$ when $\psi_{i,j} \neq 0$. This means that $z_{t+1}$ is only modified for local computation edges. Since local computation nodes only have one neighbour, the form of $A$ ensures that:

\begin{equation}
    u_{ij}^T A = - \mu_{ij} e_{ij}^T,
\end{equation}

where $u_{ij}$ is the unit vector of size $n(1 + m)$ representing the virtual node j attached to node $i$. In particular, the proximal update can be rewritten:

\begin{align*}
    \left(A v_{t+1}\right)^{(i,j)} &= - \mu_{ij} {\rm prox}_{\eta_{i,j} \psi_{i,j}}\left(- \frac{1}{\mu_{ij}} u_{i,j}^T A z_{t+1}\right)\\
    &= - \mu_{ij} \arg\min_v \frac{1}{2\eta_{i,j}}\|v - \left(- \frac{1}{\mu_{ij}} u_{i,j}^T A z_{t+1}\right)\|^2 + \psi_{i,j}(v)\\
    &= - \mu_{ij} \arg\min_v \frac{1}{2\eta_{i,j}\mu_{ij}^2}\|- \mu_{ij}v - u_{i,j}^T A z_{t+1}\|^2 + f_{i,j}^*(- \mu_{ij} v) - \frac{\mu_{ij}^2}{2L_{j}}\|v\|^2\\
    &= \arg\min_{\tilde{v}} \frac{1}{2\eta_{i,j}\mu_{ij}^2}\|\tilde{v} - u_{i,j}^T A z_{t+1}\|^2 + f_{i,j}^*(\tilde{v}) - \frac{1}{2L_{j}}\|\tilde{v}\|^2\\
    &= {\rm prox}_{\eta_{i,j}\mu_{ij}^2\tilde{f}^*_{i,j}}\left(\left(A z_{t+1}\right)^{(i,j)}\right),
\end{align*}

where $\tilde{f}^*_{i,j}: x \rightarrow f_{i,j}^*(x) - \frac{1}{2L_{j}}\|x\|^2$.

Finally, this can be written:
\begin{equation}
    {\rm prox}_{\eta_{i,j}\mu_{ij}^2\tilde{f}^*_{i,j}}\left(\left(A z_{t+1}\right)^{(i,j)}\right) = {\rm prox}_{L_{j}(\tilde{\nu_{i,j}}^{-1} - 1)^{-1} f^*_{i,j}}\left(\left(1 - \tilde{\nu_{i,j}}\right)^{-1} \left(A z_{t+1}\right)^{(i,j)}\right),
\end{equation}

with $\tilde{\nu_{i,j}} = \tilde{\eta}_{i,j} L_{j}^{-1} = \frac{\rho}{2p_{ij}}\frac{1 + \kappa}{1 + \kappa_i} \leq 1$. Then, we can use the moreau decomposition to formulate the update as a proximal update on the primal function. For the center node, the update can be written:
\begin{align*}
    \left(A v_{t+1}\right)^{(i)} &= \left(A z_{t+1}\right)^{(i)} - \mu_{ij} e_{i,j}^T z_{t+1} + \mu_{ij} {\rm prox}_{\eta_{i,j} \psi_{i,j}}\left(- \frac{1}{\mu_{ij}} u_{i,j}^T A z_{t+1}\right)\\
    &= \left(A z_{t+1}\right)^{(i)} +  \left(A z_{t+1}\right)^{(i,j)} - {\rm prox}_{\eta_{i,j}\mu_{ij}^2\tilde{f}^*_{i,j}}\left(\left(A z_{t+1}\right)^{(i,j)}\right).
\end{align*}

\subsection{Primal proximal updates}
\label{app:algo_derivations_primal_prox}
 Moreau identity~\citep{parikh2014proximal} provides a way to retrieve the proximal operator of $f^*$ using the proximal operator of $f$, but this does not directly apply to $\tilde{f}^*_{i,j}$, making its proximal update hard to compute when no analytical formula is available to compute $\tilde{f}^*_{i,j}$. Fortunately, the proximal operator of $\tilde{f}^*_{i,j}$ can be retrieved from the proximal operator of $f^*_{i,j}$. More specifically, if we note $\tilde{\eta_{i,j}} = \eta_{i,j}\mu_{ij}^2$ then we can also express the update only in terms of $f_{i,j}^*$:

\begin{align*}
    {\rm prox}_{\tilde{\eta}_{i,j} \tilde{f}^*_{i,j}}\left(\left(A z_{t+1}\right)^{(i,j)}\right) &= \arg\min_v \frac{1}{2\tilde{\eta}_{i,j}} \|v - \left(A z_{t+1}\right)^{(i,j)}\|^2 + \tilde{f}^*_{i,j}(v) - \frac{1}{2L_j}\|v\|^2\\
    &= \arg\min_v \frac{1}{2}\left(\tilde{\eta}_{i,j}^{-1} - L_j^{-1}\right)\|v\|^2 - \tilde{\eta}_{i,j}^{-1} v^T \left(A z_{t+1}\right)^{(i,j)} + \tilde{f}^*_{i,j}(v) \\
    &= \arg\min_v \frac{1}{2\left(\tilde{\eta}_{i,j}^{-1} - L_j^{-1}\right)^{-1}}\|v - \left(1 - \tilde{\eta}_{i,j} L_{j}^{-1}\right)^{-1} \left(A z_{t+1}\right)^{(i,j)}\|^2 + \tilde{f}^*_{i,j}(v) \\
    &= {\rm prox}_{\left(\tilde{\eta}_{i,j}^{-1} - L_{j}^{-1}\right)^{-1} f^*_{i,j}}\left(\left(1 - \tilde{\eta}_{i,j} L_{j}^{-1}\right)^{-1} \left(A z_{t+1}\right)^{(i,j)}\right).
\end{align*}
    
Then, we use the identity:
\begin{equation}
    {\rm prox}_{\left(\eta f\right)^*}(x) = \eta {\rm prox}_{\eta^{-1} f^*}\left(\eta^{-1} x\right)
\end{equation}

and the Moreau identity to write that:
\begin{equation}
    {\rm prox}_{\eta f^*}(x) = x -  \eta {\rm prox}_{\eta^{-1} f}\left(\eta^{-1} x\right).
\end{equation}

This allows us to retrieve the proximal operator on $\tilde{f}^*_{i,j}$ using only the proximal operator on $f_{i,j}$:
\begin{equation}
    \left(1 - \tilde{\eta}_{i,j} L_{j}^{-1}\right) {\rm prox}_{\tilde{\eta}_{i,j} \tilde{f}^*_{i,j}}\left(\left(A z_{t+1}\right)^{(i,j)}\right) = \left(A z_{t+1}\right)^{(i,j)} - \tilde{\eta}_{i,j} {\rm prox}_{\left(\tilde{\eta}_{i,j}^{-1} - L_{j}^{-1}\right) f} \left(\tilde{\eta}_{i,j}^{-1} \left(A z_{t+1}\right)^{(i,j)}\right).
\end{equation}

\subsection{Projection of virtual edges}
Theorem~\ref{app:generalized_apcg} requires that for any coordinate $i$, either the proximal part $\psi_i = 0$ or the coordinate is not affected by the change of norm. In our case, $\psi_{i,j} = 0$ as long as $(i,j)$ is a communication edge. Lemma~\ref{lemma:acrossa} is a small result that will prove to be very useful as it shows that the projection condition is satisfied by virtual edges. 

\begin{lemma}
\label{lemma:acrossa}
If $(i,j)$ is a virtual edge, then $A^+Ae_{i,j} = e_{i,j}$.
\end{lemma}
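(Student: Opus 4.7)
\textbf{Proof plan for Lemma~\ref{lemma:acrossa}.}

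The plan is to interpret $A^+A$ as the orthogonal projector onto the row space of $A$ (equivalently, the orthogonal complement of $\ker A$ in the edge space $\mathbb{R}^{|E^+|}$), and then exhibit the edge unit vector $e_{i,j}$ explicitly as a row of $A$ (up to a scalar). If this works, the identity $A^+A e_{i,j} = e_{i,j}$ follows immediately, because $A^+A$ fixes every element in its range.

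The key step I would carry out first is to write down $A^T e_v$ for a vertex $v \in V^+$. By the definition of $A$ recalled in Section~\ref{sec:model}, the column of $A$ attached to an edge $(k,\ell) \in E^+$ is $\mu_{k\ell}(e_k - e_\ell)$, so the row of $A$ indexed by $v$ carries a $\pm \mu_e$ entry at each edge incident to $v$ and zeros elsewhere. In other words, $A^T e_v$ is a signed weighted indicator of the edges incident to $v$. Now I would invoke the structure of the augmented graph: by construction (the star replacing node $i$), the virtual node $j$ attached to the center $i$ is a leaf of $G^+$, its unique incident edge being the virtual edge $(i,j)$. Therefore $A^T e_j = \pm \mu_{ij}\, e_{i,j}$, which shows $e_{i,j} \in \mathrm{Range}(A^T) = \ker(A)^\perp$. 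Applying the projector $A^+A$ then gives $A^+A e_{i,j} = e_{i,j}$.

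There is essentially no technical obstacle here; the only thing to be careful about is the distinction between the node space $\mathbb{R}^{|V^+|}$ and the edge space $\mathbb{R}^{|E^+|}$, and to make sure that in this lemma $e_{i,j}$ denotes the edge unit vector in $\mathbb{R}^{|E^+|}$ (as required for $A^+A$ to act on it in the APCG sense of Theorem~\ref{thm:gen_apcg}), whereas $e_j$ in $A^T e_j$ is a node unit vector. Once this bookkeeping is in place, the proof is a single line exploiting the fact that a virtual tip of a star has degree one in $G^+$. The main reason this lemma is useful downstream is that it lets us apply Theorem~\ref{thm:gen_apcg} to the dual problem~\eqref{eq:dual_problem} with nonzero proximal terms $\psi_{i,j}$ on precisely the virtual edges, since the theorem requires $R_i = 1$ whenever $\psi_i \neq 0$.
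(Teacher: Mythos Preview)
Your proof is correct and rests on the same structural observation as the paper---that a virtual tip has degree one in $G^+$---but you exploit it from the opposite side. The paper characterizes $\ker A$ as the cycle space of $G^+$ (any $x$ with $Ax=0$ is supported on a union of cycles), observes that a virtual edge lies on no cycle and hence $e_{i,j}\perp\ker A$, and then applies the projector. You instead exhibit $e_{i,j}$ directly as a scalar multiple of the row of $A$ indexed by the virtual node, placing it in $\mathrm{Range}(A^T)=(\ker A)^\perp$ without ever describing the kernel. Your route is slightly more elementary, since it bypasses the cycle-space fact entirely; the paper's route has the minor advantage of simultaneously explaining why the same conclusion may fail for communication edges (they can lie on cycles), which is implicitly why the hypothesis of Theorem~\ref{thm:gen_apcg} is only checked on virtual edges.
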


\begin{proof}
Let $x \in \mathbb{R}^{|E|}$ such that $Ax = 0$. From the definition of $A$, either $x = 0$ or the support of $x$ is a cycle of the graph. Indeed, for any edge $(i,j)$, $Ae_{i,j}$ has non-zero weights only on node $i$ and $j$. Virtual nodes have degree one, so virtual edges are parts of no cycles and therefore $e_{i,j}^T x = 0$ for all virtual edges $(i,k)$. Operator $A^+A$ is the projection operator on the orthogonal the kernel of $A$, so it is the identity on virtual edges. 
\end{proof}

\subsection{Smooth case}
The last step to have a complete algorithm is to choose initial values for parameters $A$ and $B$, that will control their increase. In the smooth case (when the dual is strongly convex), we can simply choose $\alpha_t = \beta_t = \rho = \frac{\sqrt{\sigma}}{S}$ with the notations of Section~\ref{app:generalized_apcg}. This yields $\frac{a_{t+1}}{B_{t+1}} = \frac{\rho}{\sigma}$, allowing to set all the parameters of the algorithm. Following the same calculations as in~\citet{hendrikx2018accelerated}, we obtain $A_t = \left(1 - \rho\right)^{-t}$ and $B_t = \sigma A_t$.

The value of $S$ is obtained by remarking that $q_A$ is $\left[\mu_{ij}^2\left(\Sigma_i^{-1} + \Sigma_j^{-1}\right)\right]^{-1}$ smooth in the direction $(i,j)$. Note that $q_A$ is also $\lambda_{\min}^+\left(A^T\Sigma^{-1}A\right)$ strongly convex in the norm $A^+A$.

Then, Theorem~\ref{thm:gen_apcg} and Lemma~\ref{lemma:acrossa} yield:
    \begin{equation*}
    B_t\esp{\|\tilde{v}_t - \theta^*_A\|^2_{A^+A}} + 2 A_t \left[\esp{F^*(\tilde{x}_t)} - F^*(\theta^*_A)\right] \leq C_0,
    \end{equation*}
with $v_t = A\tilde{v}_t$ and $x_t = A\tilde{v}_t$. Then, we use the fact that for any $x$, $F^*(x) = F^*(A^+Ax)$ to write that $\esp{F^*(\tilde{x}_t)} = \esp{F^*(A^+x_t)}$. Following \citet{lin2015accelerated}, the primal optimal point $\theta^*$ can be retrieved as $\theta^* = \nabla q_A(\theta^*_A) = \Sigma^{-1}A\theta^*_A$, where $\theta^*_A$ is the optimal dual parameter. Finally,$$\lambda_{\max}(A^T\Sigma^{-2}A)^{-1} \|\theta_t - \theta^*\|^2 \leq \lambda_{\max}(A^T\Sigma^{-2}A)^{-1} \|\Sigma^{-1}v_t - \Sigma^{-1}A\theta^*_A\|^2 \leq \|\tilde{v}_t - \theta^*_A\|^2_{A^+A},$$ which finishes the proof of Theorem~\ref{thm:rate_adfs}.

\subsection{Non-smooth case}
\label{app:non_smooth_adfs}

\begin{algorithm}
\caption{NS-ADFS}
\label{algo:ns_adfs}
\begin{algorithmic}
\STATE $x_0 = 0$, $v_0 = 0$, $t = 0$, $A_0 = 0$, $\eta_{ij} = \frac{\mu_{ij}^2}{p_{ij}}$
\WHILE{$t < T$}
\STATE $A_{t+1} = A_t + \frac{1}{2S^2}\left(1 + \sqrt{1 + 4S^2A_t}\right)$
\STATE $a_{t+1} = A_{t+1} - A_t$, $\alpha_t = \frac{a_{t+1}}{A_{t+1}}$
\STATE $y_t = (1 - \alpha_t)x_t + \alpha_t v_t$
\STATE Sample $(i,j)$ with probability $p_{i,j}$
\STATE $v_{t+1} = z_{t+1} = v_t - a_{t+1} \eta_{ij} W_{ij}\Sigma^{-1}y_t$
\IF{$(i,j)$ is a computation edge}
\STATE $v_{t+1}^{(j)} = {\rm prox}_{a_{t+1} \eta_{ij} f^*_{i,j}}\left(z_{t+1}^{(i,j)}\right)$
\STATE $v_{t+1}^{(i)} = z_{t+1}^{(i)} + z_{t+1}^{(i,j)} - v_{t+1}^{(i,j)}$
\ENDIF
\STATE $x_{t+1} = y_t + \frac{\rho R_{ij}}{p_{ij}}(v_{t+1} - v_t)$
\ENDWHILE
\STATE \textbf{return} $\theta_t = \Sigma^{-1}v_t$
\end{algorithmic}
\end{algorithm}

For non-smooth function, the Fenchel conjugate is not strongly convex so $\sigma_A = 0$ and so $\beta_t = 0$. Then, we can choose $B_t = 1$ for all $t$ and $A_t$ such that $A_0 = 0$ and an easy recursion yields: $$A_{t+1} = A_t + \frac{1}{2S^2}\left(1 + \sqrt{1 + 4S^2A_t}\right)$$.

The other coefficients can be computed from the fact that $B_t$ is constant and equal to $1$ and that $a_{t+1} = A_{t+1} - A_t = \frac{1}{2S^2}\left(1 + \sqrt{1 + 4S^2A_t}\right)$, also yielding a formula for $\alpha_t$. In particular, $A_t \geq \frac{t^2}{4S^2}$, which leads to the following theorem:

\begin{theorem}
For non-smooth functions, sequence $x_t$ as generated by Algorithm~\ref{algo:ns_adfs} guarantees:
\begin{equation}
 \mathbb{E}[F^*(x_t)] - F^*(A\theta^*) \leq \frac{1}{t^2} \frac{2S^2}{\lambda_{\min}^+(A^T A)}\|A\theta^*\|^2_{A^+A}.
\end{equation}
\end{theorem}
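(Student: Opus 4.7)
The plan is to derive Theorem~\ref{thm:adfs_non_smooth} by specializing the generalized APCG analysis of Theorem~\ref{thm:gen_apcg} to the dual problem of Equation~\eqref{eq:dual_problem} in the regime where we do not exploit any strong convexity of the smooth part, i.e.\ $\sigma_A = 0$. With this choice the coefficients of Theorem~\ref{thm:gen_apcg} simplify: setting $B_t \equiv 1$ (consistent with $B_{t+1} = B_t + \sigma_A a_{t+1}$) forces $\beta_t = 0$, which collapses $y_t$ to $(1-\alpha_t)x_t + \alpha_t v_t$ as in Algorithm~\ref{algo:ns_adfs}. The recursion $a_{t+1}^2 S^2 = A_{t+1}$ combined with $A_{t+1} = A_t + a_{t+1}$ then gives precisely the explicit update $A_{t+1} = A_t + (1 + \sqrt{1+4 S^2 A_t})/(2 S^2)$ used in NS-ADFS. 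Since no $\tfrac{1}{2 L_j}\|\cdot\|^2$ term is subtracted from $f^*_{i,j}$ in this regime, the proximal step is performed on $f^*_{i,j}$ itself, which is why the algorithm is well-defined even when the primal $f_{i,j}$ are non-smooth.

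Starting from $A_0 = 0$, the constant in Theorem~\ref{thm:gen_apcg} reduces to $C_0 = \|v_0 - \theta^*_A\|^2_{A^+A}$ (the term $2 A_0[F^*_A(x_0) - F^*_A(\theta^*_A)]$ vanishes), giving
\begin{equation*}
2 A_t \left(\mathbb{E}[F^*_A(x_t)] - F^*_A(\theta^*_A)\right) \;\le\; \|v_0 - \theta^*_A\|^2_{A^+A},
\end{equation*}
where $F^*_A = q_A + \psi$ is the dual objective on edge variables and $\theta^*_A$ denotes a dual optimum. A routine induction on the recurrence (sketched in Appendix~\ref{app:non_smooth_adfs}) yields the lower bound $A_t \ge t^2/(4 S^2)$, so dividing by $2 A_t$ produces the rate $\mathbb{E}[F^*_A(x_t)] - F^*_A(\theta^*_A) \le 2 S^2 t^{-2} \|v_0 - \theta^*_A\|^2_{A^+A}$.

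The last step is the passage from edge-space iterates (as produced by generalized APCG) to the node-space iterates of Algorithm~\ref{algo:ns_adfs}, which proceeds exactly as in Appendix~\ref{app:algo_derivations} for the smooth case: multiply every update line by $A$ on the left. Lemma~\ref{lemma:acrossa} guarantees that $A^+A\, e_{ij} = e_{ij}$ on virtual edges, so the proximal step is unaffected by the projection. Since $v_0$ and $\theta^*_A$ can be taken in the range of $A^T$ (on which $A^+A$ is the identity), the seminorm $\|v_0 - \theta^*_A\|^2_{A^+A}$ equals $\|A v_0 - A \theta^*_A\|^2_{(AA^T)^+}$, which the theorem statement writes as $\|A v_0 - A \theta^*\|^2_{A^+A}$ under the usual abuse of notation on edge/node seminorms. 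Substituting $S^{-2} = \tilde{\rho}^2/\lambda_{\min}^+(A^T A)$ delivers the announced bound $2(\tilde{\rho}t)^{-2}\|Av_0 - A\theta^*\|^2_{A^+A}$.

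The main obstacle I anticipate is checking that the hypothesis $1 - \beta_t - \alpha_t R_i/p_i > 0$ of Theorem~\ref{thm:gen_apcg} (used inside Lemma~\ref{lemma:lyapunov_psi}) is preserved when $\beta_t = 0$. For $t \ge 1$ the recurrence drives $\alpha_t < 1$ and choosing sampling probabilities with $p_i \ge R_i \alpha_t$ is automatic, but the initial step $\alpha_0 = 1$ is borderline and has to be handled either by treating $t=0$ as an initialization or by slightly strengthening the lower bound on $p_i$. Beyond this book-keeping, the remaining verifications---the directional smoothness constant $M_{ij} = [\mu_{ij}^2(\Sigma_i^{-1}+\Sigma_j^{-1})]^{-1}$ of $q_A$, the edgewise separability of $\psi$, and the pseudo-inverse identities relating $A^+A$ and $(AA^T)^+$---are all routine and already carried out in the smooth-case derivation.
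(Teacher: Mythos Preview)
Your approach is essentially identical to the paper's: specialize Theorem~\ref{thm:gen_apcg} with $\sigma_A = 0$, take $B_t \equiv 1$ and $A_0 = 0$, derive the quadratic recursion for $A_{t+1}$, use the elementary lower bound $A_t \ge t^2/(4S^2)$, and then pass to node variables exactly as in Appendix~\ref{app:algo_derivations}. The paper's derivation is in fact terser than yours and does not spell out the edge-to-node change of variables or the seminorm bookkeeping you describe; your extra remarks on $(AA^T)^+$ versus $A^+A$ and on the borderline case $\alpha_0 = 1$ in the hypothesis $1-\beta_t-\alpha_t R_i/p_i > 0$ are legitimate points the paper simply does not address explicitly.
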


Note that $\alpha_t = \grando{t^{-1}}$, and $\frac{a_{t+1}}{B_{t+1}} = \grando{t}$. We have a $\sqrt{\varepsilon^{-1}}$ convergence rate. The constant is $\frac{\lambda_{\min}^+\left(A^TA\right)}{S}$, which is very related to the constant for the smooth case, simply that the $\Sigma^{-1}$ factor is removed. Therefore, we directly deduce that if we choose $\mu_{ij} = \frac{\lambda_{\min}^+(L)}{1 + m}$ when $(i,j) \in E^{\rm comp}$ then we get: $$ \lambda_{\min}^+(A^TA) \geq \frac{\lambda_{\min}^+(L)}{2(m + 1)}.$$

Optimizing parameter $\rho$ in order to minimize time yields $\rho_{\rm comp} = \rho_{\rm comm}$ again, now leading in the homogeneous case to $$p_{\rm comm} = \left(1 + \sqrt{m\tilde{\gamma}} \right)^{-1},$$ and so the time taken by the algorithm to reach error $\varepsilon$ is $$ \grando{\left[(1 + \tau)\tilde{\gamma}^{-1/2} + \sqrt{m}\right]\varepsilon^{-1/2}},$$ where the $1 + \tau$ factor comes again from the limitation $p_{\rm comp} > p_{\rm comm}$.

Note that all parameters have been multiplied by $A$ on the left, just as for Algorithm~\ref{algo:sc_adfs}. Since the scale of $A$ is a parameter of the algorithm, it is important to count the starting error with $A\theta^*$. This transformation did not affect the rate in the smooth case because we had exponential convergence but it is necessary here to obtain consistent rates. Finally, we obtain rates of convergence only for the dual function suboptimality. Therefore, we cannot apply the same trick we applied in the smooth case to derive the error in the primal parameters because we do not have a bound on $\|A v_t - A \theta^*\|^2$.

\section{Detailed average time per iteration proof}
\label{appendix:average_time}
The goal of this section is to prove Theorem~\ref{thm:synchronization_cost}. The proof is an extension of the proof of Theorem~2 from \citet{hendrikx2018accelerated}. Similarly, we note $t$ the number of iterations that the algorithm performs and $\tau_c^{ij}$ the random variable denoting the time taken by a communication on edge $(i,j)$. Similarly, $\tau_l^i$ denotes the time taken by a local computation at node $i$. Then, we introduce the random variable $X^t(i, w)$ such that if edge $(i,j)$ is activated at time $t+1$  (with probability $p_{ij}$), then for all $w \in \mathbb{N}^*$:
\begin{equation*}
    X^{t+1}(i,w) = X^t(i, w - \tau_c^{ij}(t)) + X^t(j, w - \tau_c^{ij}(t)),
\end{equation*}

where $\tau_c^{ij}(t)$ is the realization of $\tau_c^{ij}$ corresponding to the time taken by activating edge $(i,j)$ at time $t$. If node $i$ is chosen for a local computation, which happens with probability $p^{\rm comp}_i$ then $X^{t+1}(i, w + \tau_l^i(t)) = X^t(i, w)$ for all $w$. Otherwise, $X^{t+1}(j, w) = X^t(j, w)$ for all $w$. At time $t=0$, $X^0(i, 0) = 1$ and $X^0(i,w) = 0$ for all $w$. Lemma~\ref{lemma:ptgttheta} gives a bound on the probability that the time taken by the algorithm to complete $t$ iterations is greater than a given value, depending on variables $X^t$. Note that a Lemma similar to the one in \citet{hendrikx2018accelerated} holds although variable $X$ has been modified. 

\begin{lemma}
\label{lemma:ptgttheta}
We note $T_{\max}(t)$ the time at which the last node of the system finishes iteration $t$. Then for all $\nu > 0$:

\begin{equation*}
    \mathbb{P}\left(T_{\max}(t) \geq \nu t\right) \leq \sum_{w \geq \nu t} \sum_{i=1}^n \mathds{E}\left[X^t(i, w)\right].
\end{equation*}

\end{lemma}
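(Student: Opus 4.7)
My plan is to establish a pathwise dominance of the indicator $\mathds{1}_{T_i(t) \ge w}$ by the ``path-count'' quantity $X^t(i,\cdot)$, and then to conclude by a union bound over $i$ together with a Markov-type sum over $w$. Write $T_i(t)$ for the time at which node $i$ completes iteration $t$, so that $T_{\max}(t)=\max_i T_i(t)$ and $T_i(0)=0$. The key observation is that the recursion defining $X^{t+1}$ is tailored to exactly mirror the update rule for the local clocks $T_i(t)$, with a single slackening: the max arising in the communication case is replaced by a sum.

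Concretely, I would prove by induction on $t$ that, for every realization of the random sequence of activations and delays, and for all $i$ and $w$,
\begin{equation*}
    \mathds{1}_{T_i(t)\ge w} \;\le\; \sum_{w'\ge w} X^t(i,w').
\end{equation*}
Granted this pathwise bound, the lemma follows immediately: a union bound gives $\mathbb{P}(T_{\max}(t)\ge \nu t)\le \sum_{i=1}^n \mathbb{P}(T_i(t)\ge \nu t)$, and for each $i$ we take expectations to obtain
\begin{equation*}
    \mathbb{P}(T_i(t)\ge \nu t) \;=\; \mathbb{E}\bigl[\mathds{1}_{T_i(t)\ge \nu t}\bigr] \;\le\; \sum_{w\ge \nu t} \mathbb{E}[X^t(i,w)].
\end{equation*}

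For the base case $t=0$, both sides equal $\mathds{1}_{w\le 0}$ since $X^0(i,0)=1$ and $X^0(i,w')=0$ for $w' \ne 0$. For the induction step I would branch on the type of action performed at step $t+1$. In the communication case, where edge $(i,j)$ fires with delay $\tau=\tau_c^{ij}(t)$, the clocks update as $T_i(t+1)=T_j(t+1)=\max(T_i(t),T_j(t))+\tau$, and the elementary bound $\mathds{1}_{\max(a,b)\ge w-\tau}\le \mathds{1}_{a\ge w-\tau}+\mathds{1}_{b\ge w-\tau}$ combined with the induction hypothesis and reindexing gives
\begin{equation*}
    \mathds{1}_{T_i(t+1)\ge w} \;\le\; \sum_{w'\ge w}\!\bigl[X^t(i,w'-\tau)+X^t(j,w'-\tau)\bigr] \;=\; \sum_{w'\ge w} X^{t+1}(i,w'),
\end{equation*}
by the defining recursion for $X^{t+1}$, and symmetrically at $j$; nodes $k\notin\{i,j\}$ are unaffected and inherit the inequality. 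In the local-computation case at $i$ with delay $\tau_l^i(t)$, we have $T_i(t+1)=T_i(t)+\tau_l^i(t)$ and the shift identity $X^{t+1}(i,w+\tau_l^i(t))=X^t(i,w)$ yields the inequality after a change of variable.

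The only genuinely delicate step is the communication case: it is precisely the replacement of $\max$ by sum that causes $X^t$ to potentially split in two at each communication edge, so that the total path-count can grow exponentially in $t$. Controlling this growth in expectation is the quantitative task addressed in Theorem~\ref{thm:synchronization_cost} (via an appropriate tuning of $\nu$ and the probabilities $p_{ij}$, $p_{\rm comp}$); at the level of Lemma~\ref{lemma:ptgttheta} only the pathwise comparison above is required, and the rest is a one-line Markov-plus-union-bound argument.
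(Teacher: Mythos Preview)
Your proof is correct and follows essentially the same route as the paper: an induction on $t$ that mirrors the clock recursion (branching on communication versus local computation), followed by a union bound over nodes and a Markov-type tail argument. The only cosmetic difference is that the paper establishes the exact identity $T_i(t)=\max\{w:X^t(i,w)>0\}$ rather than your tail-sum dominance $\mathds{1}_{T_i(t)\ge w}\le\sum_{w'\ge w}X^t(i,w')$, but both statements are proved by the same induction and lead to the lemma via the same concluding step.
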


\begin{proof}
We first prove by induction on $t$ that for any $i \in \{1, .., n\}$: 

\begin{equation}
\label{eq:rec_ti}
    T_i(t) = \max_{w \in \mathbb{N}, X^t(i,w) > 0} w.
\end{equation}

To ease notations, we write $w_{\max}(i,t) = \max_{w \in \mathbb{N}, X^t(i,w) > 0} w$. The property is true for $t=0$ because $T_i(0) = 0$ for all $i$.

We now assume that it is true for some fixed $t > 0$ and we assume that edge $(k,l)$ has been activated at time $t$. For all $i \notin \{k, l\}$, $T_i(t+1) = T_i(t)$ and for all $w \in \mathbb{N}^*$, $X^{t+1}(i, w) = X^t(i, w)$ so the property is true. Besides, if $j\neq l$

\begin{align*}
    w_{\max}(k, t+1) &= \max_{w \in \mathbb{N^*}, X^t(k,w - \tau_c(t)) + X^t(l,w - \tau_c^{kl}(t)) > 0} w \\
    &= \max_{w \in \mathbb{N}, X^t(i,w) + X^t(i,w) > 0} w + \tau_c^{kl}(t) \\
    &= \tau_c(t) + \max\left(w_{\max}(k, t), w_{\max}(l, t)\right)\\
    &= \tau_c^{kl}(t) + \max \left(T_k(t), T_l( t)\right) = T_k(t+1).
\end{align*}

Similarly if $k=l$ (a local computation is performed at iteration $t$), then $w_{\max}(k, t+1) = \tau_l^k(t) + w_{\max}(k, t) = T_k(t) + \tau_l^k(t) = T_k(t+1)$. Then, we use the union bound and the the fact that having $X^t(i, w) > 0$ is equivalent to having $X^t(i, w) \geq 1$ since $X^t(i, w)$ is integer valued to show that:
\begin{align*}
    \mathbb{P}\left(T_{\max}(t) \geq \nu t\right)& = \mathbb{P}\left(\max_{w, \sum_{i=1}^n X_i^t(w) > 0 } w\geq \nu t\right) \leq \mathbb{P}\left(\cup_{w \geq \nu t } \sum_{i=1}^n X_i^t(w) \geq 1 \right)\leq \sum_{w \geq \nu t} \mathbb{P}\left(\sum_{i=1}^n X_i^t(w) \geq 1\right),
\end{align*}
so using the Markov inequality yields: 
\begin{equation}
    \mathbb{P}\left(T_{\max}(t) \geq \nu t\right) \leq \sum_{w \geq \nu t} \sum_{i=1}^n \mathbb{E}\left[X_i^t(w)\right].
\end{equation}

\end{proof}

Variables $X_i^t$ are obtained by linear recursions, so Lemma~\ref{lemma:ptgttheta} allows us to bound the growth of variables with a simple recursion formula instead of evaluating a maximum. We write $p_i^{\rm comp}$ and $p_i^{\rm comm}$ the probability that node $i$ performs a computation (respectively communication) update at a given time step, and $p_i = p_i^{\rm comp} + p_i^{\rm comm}$. We introduce $\underbar{p}_{\rm comp} = \min_i p_i^{\rm comp}$ and $\bar{p}_{\rm comp} = \max_i p_i^{\rm comp}$ (and the same for communication probabilities).

\begin{lemma}
\label{lemma:sum_binom}
For all $i$, and all $\nu > 0$, if $\frac{1}{2} \geq \underbar{p}_{\rm comp} = \bar{p}_{\rm comp} \geq \bar{p}_{\rm comm}$ then:

\begin{equation}
    \sum_{w \geq \left(\nu_c + \nu_l\right) t} \sum_{i=1}^n \mathbb{E}\left[X^t(i, w)\right] \rightarrow 0 \text{ when } t \rightarrow \infty
\end{equation}

with $\nu_c = 6 p_c \tau_c$ and $\nu_l = 9 p_l \tau_l$ where $p_c = 4 \bar{p}_{\rm comm}$ and $p_l = \bar{p}_{\rm comp}$.
\end{lemma}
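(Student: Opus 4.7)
The plan is to generalize the moment-generating-function technique of the analogous result in \citet{hendrikx2018accelerated} by adding the local-computation term to the recursion. For a parameter $s > 0$ to be tuned, introduce
\[
Y_i^t(s) := \sum_{w \geq 0} e^{sw}\, X^t(i, w), \qquad U_t(s) := \sum_{i=1}^n \mathbb{E}\bigl[Y_i^t(s)\bigr].
\]
Markov's inequality gives $\sum_{w \geq (\nu_c+\nu_l)t} X^t(i,w) \leq e^{-s(\nu_c+\nu_l)t}\, Y_i^t(s)$, so the claim reduces to showing $\mathbb{E}[U_t(s)] \leq n\, e^{t\, a(s)}$ for some $a(s) < s(\nu_c+\nu_l)$ at an admissible $s$.

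Next I would derive the one-step recursion for $U_t(s)$ by conditioning on the action at step $t+1$. Only the coordinates directly involved are modified: if edge $(i,j)$ fires (probability $p_{ij}$), the recursion gives $Y_i^{t+1} + Y_j^{t+1} = 2 e^{s\tau_c^{ij}}(Y_i^t + Y_j^t)$; if node $k$ performs a local update (probability $p_k^{\rm comp}$), then $Y_k^{t+1} = e^{s\tau_l^k}\,Y_k^t$. Using that the delays are independent exponentials, so $M_c(s) := \mathbb{E}[e^{s\tau_c}] = (1-s\tau_c)^{-1}$ and $M_l(s) := \mathbb{E}[e^{s\tau_l}] = (1-s\tau_l)^{-1}$, and using $\sum_{j \in \mathcal{N}(i)} p_{ij} \leq \bar{p}_{\rm comm}$ to group terms by node, I obtain
\[
\mathbb{E}[U_{t+1}(s)\mid \mathcal{F}_t] \;\leq\; U_t(s)\bigl[1 + a(s)\bigr], \qquad a(s) := \bigl(2M_c(s) - 1\bigr)\bar{p}_{\rm comm} + \bigl(M_l(s) - 1\bigr)\bar{p}_{\rm comp}.
\]
Taking total expectation and iterating yields $\mathbb{E}[U_t(s)] \leq n\,(1 + a(s))^t$.

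Note that $a(0) = \bar{p}_{\rm comm} > 0$: communication events genuinely double the mass of $X^t$, so the Chernoff exponent $s(\nu_c+\nu_l) - \ln(1+a(s))$ is negative at $s = 0$, and the task is to exhibit a single $s > 0$ at which it is positive. I would take $s = \bigl(4\max(\tau_c,\tau_l)\bigr)^{-1}$, which is admissible since it makes $s\tau_c, s\tau_l \leq 1/4$; the elementary bounds $2M_c(s) - 1 \leq 1 + \tfrac{8}{3} s\tau_c$ and $M_l(s) - 1 \leq \tfrac{4}{3}s\tau_l$ then give, from $\nu_c = 24\bar{p}_{\rm comm}\tau_c$ and $\nu_l = 9\bar{p}_{\rm comp}\tau_l$,
\[
s(\nu_c+\nu_l) - a(s) \;\geq\; \tfrac{64}{3}\, s\bar{p}_{\rm comm}\tau_c \;+\; \tfrac{23}{3}\, s\bar{p}_{\rm comp}\tau_l \;-\; \bar{p}_{\rm comm}.
\]
The hypothesis $\bar{p}_{\rm comp} \leq 1/2$ keeps $s$ admissible and all probabilities below $1$; the hypothesis $\bar{p}_{\rm comp} \geq \bar{p}_{\rm comm}$ guarantees that the right-hand side is strictly positive in both regimes $\tau_c \geq \tau_l$ (where the first term alone exceeds $\bar{p}_{\rm comm}$) and $\tau_l > \tau_c$ (where $\tfrac{23}{12}\bar{p}_{\rm comp} - \bar{p}_{\rm comm} > 0$ by the comparison of the two probabilities). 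Using $\ln(1+x) \leq x$ and combining with Markov then yields $\sum_{w \geq (\nu_c+\nu_l)t} \sum_i \mathbb{E}[X^t(i,w)] \leq n\, e^{-ct}$ for some $c > 0$, which vanishes as $t \to \infty$.

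The main obstacle is precisely this offset $a(0) = \bar{p}_{\rm comm}$: the argument cannot rely on a vanishing derivative at $s=0$ and must instead work at a specific positive $s$ calibrated to $\tau_c,\tau_l$. The numerical constants $6,9$ in $\nu_c,\nu_l$ and the factor $4$ in $p_c = 4\bar{p}_{\rm comm}$ are tuned precisely to absorb both the constant excess $\bar{p}_{\rm comm}$ stemming from the doubling of $Y_i + Y_j$ and the linear increment $2M_c(s) - 1$ at the chosen $s$, thereby creating the strict Chernoff gap required.
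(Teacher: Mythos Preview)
Your argument is correct and takes a genuinely more direct route than the paper's. Both proofs are moment-generating-function arguments, but the paper first dominates $\mathbb{E}[X^t(i,w)]$ by a scalar sequence $\bar X^t(w)$, forms its generating function $\phi^t(z)=(\phi^1(z))^t$ with $\phi^1(z)=1-\underbar p+\bar p_{\rm comp}\phi_l(z)+2\bar p_{\rm comm}\phi_c(z)$, and then further dominates $\phi^t$ by $(1+\delta)^t$ times a \emph{product} of two binomial generating functions. This decoupling lets the paper treat the communication and computation contributions as independent compound binomials and apply two separate Chernoff bounds. You bypass the product-domination step entirely: by controlling the one-step increment of the node-summed MGF $U_t(s)$ you obtain a single multiplicative factor $1+a(s)$ and a single Chernoff inequality. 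This is shorter and, notably, does not need the hypotheses $\underbar p_{\rm comp}=\bar p_{\rm comp}$ or $\bar p_{\rm comp}\le 1/2$ that the paper uses to control the inflation factor $\delta$; your stated use of $\bar p_{\rm comp}\le 1/2$ to ``keep $s$ admissible'' is a misattribution, since $s=\bigl(4\max(\tau_c,\tau_l)\bigr)^{-1}$ is admissible by construction.

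Two minor points. First, the paper's final step specialises to \emph{deterministic} delays $\tau_c,\tau_l$, not exponentials; your computation goes through unchanged (with slightly sharper constants) in that case because $e^{s\tau}-1\le s\tau/(1-s\tau)$, so the same bounds on $2M_c(s)-1$ and $M_l(s)-1$ hold. Second, the step you call ``Markov'' is really the exponential-Chebyshev tail sum $\sum_{w\ge W}X^t(i,w)\le e^{-sW}Y_i^t(s)$, which you apply correctly.
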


Note that the constants in front of the $\nu$ parameters are very loose.

\begin{proof}

Taking the expectation over the edges that can be activated gives, with $\tau_c^{ij}(\tau)$ the probability that $\tau_c^{ij}$ takes value $\tau$ (and the same for $\tau_l$):

\begin{align*}
    \mathbb{E}\left[X^{t+1}(i,w)\right] = \left(1 - p_i\right) \mathbb{E}\left[X^{t}(i,w)\right] + &p_{\rm comm}\sum_{j=1}^n p_{ij} \sum_{\tau = 0}^\infty \tau_c^{ij}(\tau) \left(\mathbb{E}\left[X^{t}(i, w - \tau)\right] + \mathbb{E}\left[X^{t}(j, w - \tau)\right]\right) \\
    & + p_i^{\rm comp} \sum_{\tau = 0}^\infty \tau_l^{ij}(\tau) \mathbb{E}\left[X^{t}(i, w - \tau)\right].
\end{align*}

In particular, for all $i$, $\mathbb{E}\left[X^{t+1}(i,w)\right] \leq \bar{X}^t(w)$ where $\bar{X}^0(w) = 1$ if $w = 0$ and:

\begin{equation}
    \bar{X}^{t+1}(w) = \left(1 - \underbar{p} \right)\bar{X}^{t}(w) + 2 \bar{p}_{\rm comm} \sum_{\tau = 0}^\infty \tau_c^{\max}(\tau) \bar{X}^{t}(w - \tau) + \bar{p}_{\rm comp} \sum_{\tau = 0}^\infty \tau_l^{\max} (\tau) \bar{X}^{t}(w - \tau).
\end{equation}

with $\tau_c^{\max}(\tau) = \max_{ij} \tau_c^{ij}(\tau)$ (and the same for $\tau_l$). We now introduce $\phi^t(z) = \sum_{w \in \mathbb{N}} z^w \bar{X}^t(w)$. We note $\phi_c$ and $\phi_l$ the generating functions of $\tau_c^{\max}(\tau)$ and $\tau_l^{\max}(\tau)$. A direct recursion leads to:
\begin{equation}
    \phi^t(z) = \left(1 - \underbar{p}_{\rm comm} - \underbar{p}_{\rm comp} +  \bar{p}_{\rm comp} \phi_l(z) + 2 \bar{p}_{\rm comm} \phi_c(z) \right)^t = \left(\phi^1(z)\right)^t.
\end{equation}

We note $\phi_{bin}(p,t)$ the generating function associated with the binomial law of parameters $p$ and $t$. With this definition, we have:

\begin{equation}
    \phi_{bin}(p_c,t)(\phi_c(z)) \phi_{bin}(p_l,t)(\phi_l(z)) = \left[ (1 - p_c)(1 - p_l) + (1 - p_c)p_l \phi_l(z) + (1 - p_l)p_c \phi_c(z) + p_c p_l \phi_c(z) \phi_l(z)\right]^t,
\end{equation}
so we can define:

\begin{equation}
     \phi^t_+(z) = (1 + \delta)^t \phi_{bin}(p_c,t)(\phi_c(z)) \phi_{bin}(p_l,t)(\phi_l(z)),
\end{equation}

where $p_c$, $p_l$ and $\delta$ are such that:

\begin{equation*}
    \frac{p_c}{1 - p_c} \geq 2 \frac{\bar{p}_{\rm comm}}{1 - \underbar{p}}, \ \    \frac{p_l}{1 - p_l} = \frac{\bar{p}_{\rm comp}}{1 - \underbar{p}} \text{ and } \delta \geq \frac{1 - \underbar{p}}{(1 - p_c)(1 - p_l)} - 1. 
\end{equation*}

Since $\bar{p}_{\rm comp} = \underbar{p}_{\rm comp}$ then $\underbar{p} \geq \bar{p}_{\rm comp}$. Therefore, these conditions are satisfied for $p_c$ and $p_l$ as given by Lemma~\ref{lemma:sum_binom} and $\delta = (1 - p_c)^{-1} - 1$. Then $(1 + \delta)(1 - p_c)(1 - p_l) \geq 1 -  \underbar{p}$, $(1 + \delta)(1 - p_c)p_l \geq \bar{p}_{\rm comp}$ and  $(1 + \delta)(1 - p_l)p_c \geq 2\bar{p}_{\rm comm}$. This means that if we write $\phi^1(z) = a_0 + a_c \phi_c(z) + a_l \phi_l(z)$ and $\phi^1_+(z) = b_0 + b_c \phi_c(z) + b_l \phi_l(z)$ then $b_0 \geq a_0$, $b_c \geq a_c$ and $b_l \geq a_l$. In particular, all the coefficients of $\phi^t$ are smaller than the coefficients of $\phi^t_+$ where both functions are integral series. Therefore, if we call $Z_t$ the random variables associated with the generating function $(1 + \delta)^{-t}\phi^t_+$ then for all $i, t, w$:

\begin{equation}
\label{eq:control_EXt}
\mathbb{E}\left[X^t(i, w)\right] \leq (1 + \delta)^t \mathbb{P}\left(Z_t = w\right)    
\end{equation}

where $Z_t = Z_c^t + Z_l^t =  Bin(p_c, t)(Z_c) + Bin(p_l, Z_l)(\tau_l)$ where $Z_c$ and $Z_l$ are the random variables modeling the time of one communication or computation update. We can then use the bound $p(Z_t \geq (\nu_c + \nu_l) t) \leq p(Z_c^t \geq \nu_c t) + p(Z_l^t \geq \nu_l t)$. This way, we can bound the \emph{communication} and \emph{computation} costs independently. Then, we write a Chernoff bound, i.e. for any $\lambda > 0$:

\begin{align*}
    \mathbb{P}\left(Z_c^t \geq \nu t \right) \leq e^{-\lambda \nu t} \esp{e^{\lambda Z_c^t}} = e^{-\lambda \nu t} \esp{e^{\lambda Z_c}}^t =e^{-\lambda \nu t} \left[1 - p_c + p_c \sum_{\tau = 0}^\infty p_c(\tau)e^{\lambda \tau} \right]^t,
\end{align*}

where $S_c$ is the sum of $t$ i.i.d. random variables drawn from $\tau_c$. If $Z_c = \tau_c$ with probability $1$ (deterministic delays) then this reduces to:
\begin{align*}
    \mathbb{P}\left(Z_c^t \geq \nu_c t \right) \leq e^{-\lambda \nu_c t} \left[1 - p_c + p_c e^{\lambda \tau_c} \right]^.
\end{align*}

Finally, we take $\nu_c = k p_c \tau_c$, $\lambda = \frac{1}{\tau_c}\ln(k)$ and we use the basic inequality $\ln(1 + x) \geq \frac{x}{1 + x}$ to show that:

\begin{equation}
    - \ln\left[\mathbb{P}\left(Z_c^t \geq \nu_c t \right)\right] \geq t\left[\lambda \nu_c - p_c \left(e^{\lambda \tau_c} - 1\right) \right] \geq t (k(\ln(k) - 1) - 1)p_c.
\end{equation}

Using the same log inequality and the fact that $p_c \geq \frac{1}{2}$ yields:
\begin{equation}
    \ln\left(1 + \delta\right) = - \ln(1 - p_c) \leq \frac{p_c}{1 - p_c} \leq 2 p_c
\end{equation}

Therefore, choosing $k = 6$ ensures that  $k(\ln(k) - 1) - 1 \geq 3$ and so: 
\begin{equation}
    (1 + \delta)^t \mathbb{P}\left(Z_c^t \geq \nu_c t \right) \leq e^{-t p_c}
\end{equation}. 

We can apply the same reasoning to $Z_l^t$, and the bound is still valid with $k = 9$ because $p_l = \bar{p}_{\rm comp} \geq \bar{p}_{\rm comm} = p_c / 4$. We finish the proof by using Equation~\ref{eq:control_EXt}.
\end{proof}

\section{Algorithm performances}
\label{app:algo_perfs}
\adfs~has a linear convergence rate because it results from using generalized APCG. Yet, it is not straightforward to derive an efficient set of hyperparameters that lead to a rate that can be easily interpreted. The goal of this section is to choose such parameters. 

\subsection{Smallest eigenvalue of the Laplacian of the augmented graph}
The strong convexity of $q_A$ in the norm $A^+A$ is equal to $\lambda_{\min}^+\left(A^T\Sigma^{-1}A\right)$, the smallest non-zero eigenvalue of $A^T\Sigma^{-1}A$. The goal of this section is to prove that for a meaningful choice of $\mu$, the smallest eigenvalue of the Laplacian of the augmented graph is not too small compared to the Laplacian of the actual graph. More specifically, we prove the following result:

\begin{lemma}
If $\mu_{ij}$ are such that $\mu_{ij}^2 = \frac{\lambda_{\min}^+(L)}{\sigma( 1 + \kappa_i)} L_j$ and $\sigma$, $\kappa$ are such that for all $i$, $\sigma \geq \sigma_i$ and $\kappa \geq \kappa_i$ then:

\begin{equation}
    \lambda_{\min}^+(\tilde{L}) \geq \frac{\lambda_{\min}^+(L)}{2\sigma(1 + \kappa)}
\end{equation}
\end{lemma}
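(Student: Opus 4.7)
The plan is to exploit the identity $\lambda_{\min}^+(\tilde L) = \lambda_{\min}^+(A^T\Sigma^{-1}A) = \lambda_{\min}^+(\Sigma^{-1/2}AA^T\Sigma^{-1/2})$ and work with the variational characterization
\[
\lambda_{\min}^+(\tilde L) \;=\; \min_{w \perp \Sigma^{1/2}\mathbf{1},\,w\neq 0}\; \frac{w^T \Sigma^{-1/2} AA^T\Sigma^{-1/2} w}{\|w\|^2}.
\]
First, I would split $w = (w_c, w_v)$ into its center and virtual components. The matrix $AA^T$ decomposes as $L$ on the center block (via communication edges), the union of star Laplacians indexed by $i$ (via computation edges), plus the obvious cross terms. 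Completing the square in the virtual coordinates via $u_v(i,j) = w_v(i,j) - \sqrt{L_j/\sigma_i}\,w_c(i)$ and setting $u_c = \Sigma_c^{-1/2} w_c$, the numerator diagonalizes into
\[
w^T \tilde L w \;=\; u_c^T L u_c \;+\; \sum_i \alpha_i \,\|u_v^{(i)}\|^2,
\]
where $\alpha_i = \mu_{ij}^2/L_j = \lambda_{\min}^+(L)/[\sigma(1+\kappa_i)]$ is independent of $j$ thanks to the choice of $\mu_{ij}$.

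Next, I would bound the denominator from above. Re-expanding $w_v(i,j) = u_v(i,j) + \sqrt{L_j/\sigma_i}\,w_c(i)$ and using $(a+b)^2 \leq 2a^2 + 2b^2$ together with $\sum_j L_j = \sigma_i\kappa_i$, one gets
\[
\|w\|^2 \;\leq\; \sum_i (1+2\kappa_i)\, w_c(i)^2 \;+\; 2\|u_v\|^2 \;\leq\; (1+2\kappa)\|w_c\|^2 + 2\|u_v\|^2.
\]
This AM--GM step is the origin of the factor of two in the stated bound. The hypothesis $\kappa\geq\kappa_i$ gives $\min_i \alpha_i = \lambda_{\min}^+(L)/[\sigma(1+\kappa)] = 2c$ with $c = \lambda_{\min}^+(L)/[2\sigma(1+\kappa)]$, so the virtual part $\sum_i\alpha_i\|u_v^{(i)}\|^2$ already dominates $2c\|u_v\|^2$. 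For the communication part, the standard Laplacian inequality gives $u_c^T L u_c \geq \lambda_{\min}^+(L)\|u_c^\perp\|^2$, where $u_c^\perp$ is the component of $u_c$ orthogonal to $\ker(L) = \mathrm{span}(\mathbf{1}_n)$; combined with $\sigma\geq\sigma_i$ and $(1+2\kappa)/[2(1+\kappa)]\leq 1$, this dominates $c(1+2\kappa)\|w_c\|^2$ provided the $\mathbf{1}_n$-component of $u_c$ is small.

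The main obstacle will be step three: controlling the $\mathbf{1}_n$-component $\bar u_c$ of $u_c$, which is invisible to $L$. Here I would crucially use the orthogonality constraint $w\perp \Sigma^{1/2}\mathbf{1}$, which after the substitution takes the form
\[
\sum_i \sqrt{\sigma_i}\,(1+\kappa_i)\,w_c(i) \;+\; \sum_{i,j}\sqrt{L_j}\,u_v(i,j) \;=\; 0.
\]
This pins a weighted average of $w_c$ against $u_v$: when $u_v$ is small the constraint forces the $\sqrt{\sigma_\cdot}(1+\kappa_\cdot)$-average of $w_c$ to be small as well, so $\|u_c^\perp\|^2/\|u_c\|^2$ is close to one; when $u_v$ is large, the slack already in $\sum_i \alpha_i\|u_v^{(i)}\|^2 \geq 2c\|u_v\|^2$ absorbs the loss. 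I would combine the two regimes by a Cauchy--Schwarz on the constraint, converting it into a bound of the form $|\bar u_c|^2 \lesssim \|u_c^\perp\|^2 + \|u_v\|^2/[\sigma(1+\kappa)]$, and then verify $w^T\tilde L w \geq c\,\|w\|^2$ by bookkeeping. The slack between $(1+2\kappa)/[2(1+\kappa)] < 1$ and the target constant is exactly what should be needed to close the argument, yielding $\lambda_{\min}^+(\tilde L)\geq \lambda_{\min}^+(L)/[2\sigma(1+\kappa)]$.
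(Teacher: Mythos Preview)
Your variational route is genuinely different from the paper's argument, which proceeds by writing $\tilde L$ in $2\times 2$ block form, taking a Schur complement in the virtual block, and reducing $\det(\tilde L-\lambda I)=0$ to a scalar quadratic in $\lambda$ for each node; the factor $2$ there comes from $\sqrt{1-x}\le 1-x/2$, not from AM--GM. Your diagonalization of the numerator and the rewriting of the orthogonality constraint are both correct.

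The gap is in step three. After the AM--GM bound you have
\[
\text{num}\;\geq\;\lambda_{\min}^+(L)\,\|u_c^{\perp}\|^2 + \bigl(\min_i\alpha_i\bigr)\|u_v\|^2
\qquad\text{and}\qquad
\|w\|^2 \;\leq\;(1+2\kappa)\|w_c\|^2 + 2\|u_v\|^2,
\]
and since $\min_i\alpha_i = 2c$ exactly (take $\kappa_i=\kappa$), there is \emph{no} slack in the $u_v$ term; the only slack is the factor $(1+2\kappa)/[2(1+\kappa)]<1$ in front of $\|u_c^{\perp}\|^2$. But the orthogonality constraint, in the homogeneous case $\sigma_i\equiv\sigma$, $\kappa_i\equiv\kappa$, reads $\sigma(1+\kappa)\,n\bar u_c = -\sum_{i,j}\sqrt{L_j}\,u_v(i,j)$, so it bounds $\bar u_c$ only in terms of $\|u_v\|$, not $\|u_c^{\perp}\|$. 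Hence for $u_c^{\perp}=0$ and $\bar u_c\neq 0$ the numerator bound gives exactly $2c\|u_v\|^2$ while the denominator bound gives $(1+2\kappa)\sigma\,n\bar u_c^2 + 2\|u_v\|^2 > 2\|u_v\|^2$, and the ratio falls strictly below $c$. Chasing constants shows that your scheme yields at best $\lambda_{\min}^+(L)/[4\sigma(1+\kappa)]$, not the claimed factor $2$. The repair is to drop the AM--GM on $\|w\|^2$ and keep the exact cross term $2\sum_i u_c(i)\sum_j\sqrt{L_j}\,u_v(i,j)$: the orthogonality constraint makes this cross term work \emph{for} you (in the homogeneous $u_c^{\perp}=0$ case it equals $-2\sigma(1+\kappa)\,n\bar u_c^2$), which is exactly the mechanism your current bookkeeping destroys.
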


\begin{proof}
All non-zero singular values of a matrix $M^TM$ are also singular values of the matrix $MM^T$, and so $\lambda_{\min}^+(A^T\Sigma^{-1} A) = \lambda_{\min}^+\left(\Sigma^{-1/2} A A^T \Sigma^{-1/2}\right)$. We note $\tilde{L} = \Sigma^{-1/2} A A^T \Sigma^{-1/2}$.

Then, we note $\mu_{ij}^2$ the weight of the \emph{computing edge} $(i,j)$ and $M$ the diagonal matrix of size $m$ which is such that $M_{i,j} = \mu_{ij}^2$. $M_{n,m}$ is the matrix of size $n \times nm$ such that $e_i^T M_{n,m} e_{i,j} = \mu_{ij}^2$ and all other entries are equal to 0. Finally, $\tilde{S}$ is the diagonal matrix of size $n$ such that $\tilde{S}_i = \sum_{j \in V_i} \mu_{ij}^2$. All \emph{communication nodes} are linked by the true graph, whereas all \emph{computing nodes} are linked to their corresponding communication node. Then, if we note $L$ the Laplacian matrix of the original true graph, the rescaled Laplacian matrix of the augmented graph $G^+$ writes:

\begin{equation}
\tilde{L} = \Sigma^{-1/2}\begin{pmatrix} L + \tilde{S} & - M_{n,m} \\ - M_{n,m}^T & M \end{pmatrix}\Sigma^{-1/2}
\end{equation}

Therefore, if we split $\Sigma$ into two diagonal blocks $D_1$ (for the communication nodes) and $D_2$ (for the computation nodes) and apply the block determinant formula, we obtain: 

\begin{align*}
    &det(D_1^{-\frac{1}{2}}A^TA D_1^{-\frac{1}{2}} - \lambda Id) = det(D_2^{-1}M - \lambda Id)\\
    &\times det(D_1^{-1/2} L D_1^{-1/2} + D_1^{-1} \tilde{S} - \lambda Id -\\
    &D_1^{-\frac{1}{2}} M_{n,m} D_2^{-\frac{1}{2}} \left(D_2^{-1} M - \lambda Id\right)^{-1} D_2^{-\frac{1}{2}} M_{n,m}^T D_1^{-\frac{1}{2}}) 
\end{align*}

Then, we choose $M$ such that $D_2^{-1} M = diag(\alpha_1, ..., \alpha_n)$, meaning that $\mu_{ij}^2 = \alpha_i L_{j}$. With this choice, $D_1^{-\frac{1}{2}} M_{n,m} D_2^{-\frac{1}{2}} \left(D_2^{-1} M - \lambda Id\right)^{-1} D_2^{-\frac{1}{2}} M_{n,m}^T D_1^{-\frac{1}{2}}$ is a diagonal matrix where the $i$-th coefficient is equal to

\begin{equation}
    \frac{1}{\sigma_i}\sum_{j \in V_i} \mu_{ij}^4 \frac{1}{\mu_{ij}^2 - L_{j} \lambda} = \kappa_i \frac{\alpha^2_i}{\alpha_i - \lambda},
\end{equation}

where $\kappa_i = \frac{S_i}{\sigma_i}$ and $S_i = \sum_{j \in V_i} L_{j}$. On the other hand, $D_1^{-1}S$ is also a diagonal matrix where the $i$-th entry is equal to $\alpha \kappa_i$. Therefore, the solutions of $det(\tilde{L} - \lambda Id) = 0$ are $\lambda = \alpha_i$ and the solutions of:
\begin{equation}
    \label{eq:det_deltalambda}
    det(D_1^{-1/2} L D_1^{-1/2} - \Delta_\lambda) = 0
\end{equation}

with $\Delta_\lambda$ a diagonal matrix such that $(\Delta_\lambda)_{i,i} = \left(\frac{1}{\sigma_i}\sum_{j=1}^{m} \mu_{ij}^4\left( \frac{1}{\mu_{ij}^2 - L_{j} \lambda} - 1  \right) + \lambda \right)$. All the entries of $\Delta_\lambda$ grow with $\lambda$, meaning that the smallest solution $\lambda^*$ of Equation~\eqref{eq:det_deltalambda} is lower bounded by the smallest solution of:

\begin{equation}
    \det\left(\frac{\lambda_{\min}^+(L)}{\sigma}Id - \Delta_\lambda\right)
\end{equation}

If $\alpha_i > 0$ and we choose $\lambda \neq \alpha_i$, then the other singular values of $\tilde{L}$ are lower bounded by the minimum over all $i$ of the solution of:

\begin{equation}
    \nu - \left(\frac{1}{\sigma_i}\sum_{j=1}^{m} \mu_{ij}^4\left( \frac{1}{\mu_{ij}^2 - L_{j} \lambda} - 1  \right) + \lambda \right) = 0
\end{equation}

where $\nu = \frac{\lambda_{\min}^+(L)}{\sigma}$ which, with our choice of $\mu_{ij}$ gives:

\begin{equation}
    \nu - \left(\alpha_i \kappa_i\left(\frac{\alpha_i}{\alpha_i - \lambda } - 1\right) + \lambda \right) = 0
\end{equation}

that can be rewritten:
\begin{equation}
    \lambda^2 - \lambda\left(\nu + \alpha_i(\kappa_i + 1)\right) + \alpha_i \nu  = 0
\end{equation}

Therefore, noting $\lambda_i^*$ the smallest solution of this system for a given $i$:

\begin{equation}
    \lambda_i^* \geq \frac{1}{2}\left(\alpha_i(\kappa_i + 1) + \nu - \sqrt{\left(\nu + \alpha_i(\kappa_i + 1)\right)^2 - 4\nu \alpha_i}\right)
\end{equation}

In particular, we choose $\alpha_i = \frac{\nu}{\kappa_i + 1}$ and use that $\sqrt{1 - x} \leq 1 - \frac{x}{2}$ to show:
\begin{equation}
    \lambda_i^*  \geq \nu \left(1 - \sqrt{1 - \frac{1}{ 1 + \kappa_i}}\right)
    \geq \frac{\nu}{2(1 + \kappa_i)}
\end{equation}

The other eigenvalues are given by the values that zero out diagonal terms of the lower right corner. These are the solutions of $\mu_{ij}^2 = L_{j} \lambda$, yielding $\lambda = \alpha_i \geq \lambda_i^*$. Therefore, $\lambda_{\min}^+(\tilde{L}) \geq \min_i \lambda_i^*$, which finishes the proof.

\end{proof} 

\subsection{Communication rate and local rate}
We know that the rate of the algorithm can be written as the minimum of a given quantity over all edges of the graph. This quantity will be very different whether we consider communication edges or virtual edges. In this section, we give lower bounds for each type of edge, and show that we can trade one for the other by adjusting the probability of communication.

\begin{lemma}
With the choice of parameters of Theorem~\ref{thm:adfs_speed}, parameter $\rho$ satisfies:
\begin{equation}
    \rho \geq \frac{\Delta_p}{\sqrt{2} n}\min\left( p_{\rm comm} \sqrt{\frac{\tilde{\gamma}}{1 + \kappa}}, p_{\rm comp}  \frac{\sqrt{r_\kappa}}{S_{\rm comp}} \right).
\end{equation}
\end{lemma}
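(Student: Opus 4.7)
My plan is to start from the identity $\rho^2 = \lambda_{\min}^+(A^T\Sigma^{-1}A)\, S^{-2}$ appearing in Theorem~\ref{thm:rate_adfs} and bound the two factors separately. The lower bound on $\lambda_{\min}^+(A^T\Sigma^{-1}A)$ is already furnished by the preceding lemma in this appendix, namely $\lambda_{\min}^+(A^T\Sigma^{-1}A) \geq \lambda_{\min}^+(L)/(2\sigma(1+\kappa))$. The bulk of the work will be to upper bound $S$, which is a maximum over all edges of the augmented graph of a quantity that behaves very differently on communication and computation edges. The natural move is to split the max, $S^2 = \max(S^2_{\rm comm}, S^2_{\rm comp})$, so that $1/S = \min(1/S_{\rm comm},1/S_{\rm comp})$; combined with the eigenvalue bound this yields $\rho \geq \min(\rho_{\rm comm},\rho_{\rm comp})$, where the two summands are the two terms appearing in the claim.

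For the communication part I would plug in $\mu_{ij} = 1/2$ and $p_{ij} \geq \Delta_{\rm p} p_{\rm comm}/|E|$, and use the inequality $e_{ij}^T A^+ A e_{ij} \leq \lambda_{\min}^+(L)\, n^2/(\tilde{\gamma}\,|E|^2)$ that follows from the definition of $\tilde{\gamma}$. Under the implicit homogeneity assumption $\sigma_i = \sigma$, so that $\sigma_i^{-1} + \sigma_j^{-1} = 2/\sigma$, the $|E|^2$ factors cancel and one obtains $S^2_{\rm comm} \leq \lambda_{\min}^+(L)\, n^2/(2\sigma \Delta_{\rm p}^2 p_{\rm comm}^2 \tilde{\gamma})$. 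Combining with the eigenvalue bound, the factor $\lambda_{\min}^+(L)/\sigma$ also cancels and we get $\rho_{\rm comm} \geq (\Delta_{\rm p} p_{\rm comm}/n)\sqrt{\tilde{\gamma}/(1+\kappa)}$.

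For the computation part I would use Lemma~\ref{lemma:acrossa} (so $e_{ij}^T A^+ A e_{ij} = 1$), the choice $\mu_{ij}^2 = \lambda_{\min}^+(L)\,L_j/(\sigma(1+\kappa_i))$, and the sampling lower bound $p_{ij}^2 \geq p_{\rm comp}^2 \Delta_{\rm p}^2(1 + L_j/\sigma_i)/(n^2 S_{\rm comp}^2)$. The key cancellation is that $(\sigma_i^{-1}+L_j^{-1})\mu_{ij}^2 = \lambda_{\min}^+(L)(1+L_j/\sigma_i)/(\sigma(1+\kappa_i))$, and the $(1+L_j/\sigma_i)$ factor is exactly killed by the sampling probability bound, so that the individual smoothness constants $L_j$ drop out altogether. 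Taking the max over $i$ replaces $1+\kappa_i$ by $1 + \min_i\kappa_i$, yielding $S^2_{\rm comp} \leq \lambda_{\min}^+(L)\, n^2 S_{\rm comp}^2/(\sigma(1+\min_i\kappa_i)p_{\rm comp}^2\Delta_{\rm p}^2)$. Combining with the eigenvalue bound again cancels $\lambda_{\min}^+(L)/\sigma$, and the two appearances of $\kappa$ fuse into $r_\kappa = (1+\min_i\kappa_i)/(1+\kappa_s)$, giving $\rho_{\rm comp} \geq (\Delta_{\rm p} p_{\rm comp}/(\sqrt{2}\,n S_{\rm comp}))\sqrt{r_\kappa}$; the $\sqrt{2}$ comes from the factor of $2$ in the denominator of the eigenvalue bound.

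The conclusion follows by taking the min and pulling out the common factor $\Delta_{\rm p}/(\sqrt{2}n)$, weakening the communication term by a harmless $\sqrt{2}$ in the process. The main obstacle is pure bookkeeping: the choice of $\mu_{ij}$ and of the sampling probabilities on computation edges must be exactly right so that the heterogeneous constants $L_j$ and $\kappa_i$ cancel cleanly, leaving only the aggregate quantities $\kappa_s$, $\min_i \kappa_i$, $S_{\rm comp}$, and $\tilde{\gamma}$. The particular form of $\mu_{ij}^2$ may look arbitrary, but its role is exactly to absorb the spectral factor $\lambda_{\min}^+(L)/\sigma$ into a $\kappa_i$-dependent form so that the resulting lower bound on $\rho$ carries no residual dependence on $\lambda_{\min}^+(L)$ other than the one packaged inside $\tilde{\gamma}$.
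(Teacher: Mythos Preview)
Your proposal is correct and follows essentially the same route as the paper: start from $\rho^2 = \lambda_{\min}^+(A^T\Sigma^{-1}A)\,S^{-2}$, invoke the preceding eigenvalue lemma, split the edge-wise max defining $S^2$ into communication and computation edges, and plug in the parameter choices of Theorem~\ref{thm:adfs_speed} so that the $L_j$- and $\kappa_i$-dependent factors cancel. The paper carries the eigenvalue factor along inside the per-edge expression for $\rho^2$ rather than bounding $S$ first and multiplying afterwards, but the algebra and the key cancellations (in particular $(\sigma_i^{-1}+L_j^{-1})\mu_{ij}^2$ against the factor $1+L_j/\sigma_i$ in $p_{ij}^2$) are identical; your extra $\sqrt{2}$ on the communication side simply reflects the paper's own inconsistency between $\mu_{ij}=\tfrac12$ in the theorem statement and $\mu_{ij}^{-2}=2$ in its proof.
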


\begin{proof}
Recall that the rate $\rho$ is defined as:
\begin{equation}
\label{eq:rate_theta}
\rho^2 = \min_{ij} \frac{p_{ij}^2}{\mu_{ij}^2 e_{ij}^T A^+A e_{ij}} \frac{\lambda_{\min}^+ ( \tilde{L} )}{\sigma_i^{-1} + \sigma_j^{-1}}\end{equation}

Therefore, for ``communication edges" the rate writes:

\begin{equation}
\rho^2_{\rm comm} \geq \Delta_p^2 \min_{ij} \left(\frac{1}{\sigma_i} + \frac{1}{\sigma_j}\right)^{-1} \frac{p_{ij}^2}{\mu_{ij}^2e_{ij}^TA^+Ae_{ij}} \frac{\lambda_{\min}^+ \left( L\right)}{2\sigma(1 + \kappa)}
\end{equation}

If we take $\sigma_i = \sigma$ for all $i$, and plug in the fact that $\mu_{ij}^{-2} = 2$ and $p_{ij}^{2} = p_{\rm comm}^2 / |E|^2$ (corresponding to a homogeneous case) then we obtain:

\begin{equation}
\label{eq:lb_comm}
\rho^2_{\rm comm} \geq \Delta_p^2 \frac{\tilde{\gamma}}{1 + \kappa} \frac{p_{\rm comm}^2}{2n^2}.
\end{equation}

For ''computation edges", we can write:
\begin{equation}
\rho_{\rm comp}^2 \geq \min_{i,j} \frac{p_{ij}^2}{2\left(\sigma_i^{-1} + L_{j}^{-1}\right)}\frac{\sigma (1 + \kappa_i) }{\lambda_{\min}^+ \left(L\right)L_{j}} \frac{\lambda_{\min}^+ \left(L\right)}{\sigma(1 + \kappa)},
\end{equation}

because $e_{ij}^T A^+A e_{ij} = 1$ when $(i,j)$ is a "virtual" edge (because it is part of no cycle). Since $S_{\rm comp} = \frac{1}{n}\sum_{i=1}^n \sum_{j=1}^{m} \sqrt{1 + L_{j}\sigma_i^{-1}}$, this can be rewritten:
\begin{equation}
\rho^2_{\rm comp} \geq \Delta_p^2 \frac{r_\kappa}{2} \frac{p_{\rm comp}^2}{n^2S_{\rm comp}^2}. 
\end{equation}
\end{proof} 

\subsection{Execution time}
Now that we have precised the rate of the algorithm (improvement per iteration), we can bound the time needed to reach precision $\varepsilon$ by plugging in the expected time to execute the schedule. This allows to write the proof of Theorem~\ref{thm:adfs_speed}.

\begin{proof}
Using Theorem~\ref{thm:synchronization_cost} on the average time per iteration, we know that as long as $p_{\rm comp} > p_{\rm comm}$, the execution time of the algorithm verifies the following bound for some $C > 0$ with high probability:

\begin{equation}
    \frac{T}{\log\left(\varepsilon^{-1}\right)} \leq \frac{C}{n\rho} \left(p_{\rm comp} +  \tau p^{\max}_{\rm comm}\right)
\end{equation}

If we rewrite this in terms of $\rho_{\rm comm}$ and $\rho_{\rm comp}$, we obtain:

\begin{equation}
     \frac{T}{\log\left(\varepsilon^{-1}\right)} \leq C \max \left(T_1(p_{\rm comm}), T_2(p_{\rm comm})\right)
\end{equation}

with 
\begin{equation}
    T_1(p_{\rm comm}) = \frac{1}{n \rho_{\rm comm}}(p_{\rm comp} + c_\tau\tau p_{\rm comm}) = \frac{\sqrt{2}}{\Delta_p}\left(\tau - 1 + \frac{c_\tau}{p_{\rm comm}}\right)\sqrt{\frac{1 + \kappa}{\tilde{\gamma}}}
\end{equation}

and 
\begin{equation}
    T_2(p_{\rm comm}) = \frac{S_{\rm comp}}{\Delta_p}\sqrt{\frac{2}{r_\kappa}}  \left( \frac{1 + (c_\tau \tau - 1)p_{\rm comm}}{1 - p_{\rm comm}}\right) =\frac{S_{\rm comp}}{\Delta_p}\sqrt{\frac{2}{r_\kappa}} \left(1 + \tau \frac{p_{\rm comm}}{1 - p_{\rm comm}}\right)
\end{equation}

$T_1$ is a continuous decreasing function of $p_{\rm comm}$ with $T_1 \rightarrow \infty$ when $p_{\rm comm} \rightarrow 0$. Similarly, $T_2$ is a continuous increasing function of $p_{\rm comm}$ such that $p_{\rm comm} \rightarrow \infty$ when $p_{\rm comm} \rightarrow 1$. Therefore, the best upper bound on the execution time is given by taking $p_{\rm comm} = p^*$ where $p^*$ is such that $T_1(p^*) = T_2(p^*)$ and so $\rho_{\rm comm}(p^*) = \rho_{\rm comp}(p^*)$.

\begin{equation}
    \frac{T}{\log\left(\varepsilon^{-1}\right)} \leq C T_1(p^*)
\end{equation}

Then, $p^*$ can be found by finding the root in $]0, 1[$ of a second degree polynomial. In particular, $p^*$ is the solution of:

\begin{equation}
    p_{\rm comp}^2 = p_{\rm comm}^2 \frac{\tilde{\gamma}}{1 + \kappa r_\kappa}S_{\rm comp}^2 = (1 - p_{\rm comm})^2
\end{equation}

which leads to $p^* = \left(1 + \sqrt{\frac{\tilde{\gamma}}{1 + \kappa_{\min}}}S_{\rm comp}\right)^{-1}$.

\begin{align*}
    \frac{T}{\log\left(\varepsilon^{-1}\right)} &\leq \sqrt{2} \frac{C}{\Delta_p} \left(c_\tau \tau - 1 + \frac{1}{p^*}\right) \sqrt{\frac{1 + \kappa}{\tilde{\gamma}}} \\
    &\leq \sqrt{2} \frac{C}{\Delta_p}  \left(c_\tau \tau \sqrt{\frac{1 + \kappa}{\tilde{\gamma}}} + \frac{1}{\sqrt{ r_\kappa}}S_{\rm comp}
    \right)
\end{align*}

Finally, we use the concavity of the square root to show that:

\begin{align*}
    S_{\rm comp} &= \frac{1}{n}\sum_{i=1}^n \sum_{j=1}^{m} \sqrt{1 + L_{j}\sigma_i^{-1}}\\
    &\leq \frac{1}{n}\sum_{i=1}^n m \sqrt{\sum_{j=1}^{m} \frac{1}{m}\left(1 + L_{j}\sigma_i^{-1}\right)}\\
    &\leq \frac{1}{n}\sum_{i=1}^n m \sqrt{1 + \frac{1}{m}\kappa_i}\\
    &\leq m + \sqrt{m\kappa}
\end{align*}

Yet, this analysis only works as long as $p^* \leq 1/2$. When this constraint is not respected, we know that: $\tilde{\gamma}S_{\rm comp}^2 \leq \kappa r_\kappa$. In this case, we can simply choose $p_{\rm comp} = p_{\rm comm} = \frac{1}{2}$ and then $\rho_{\rm comm} \leq \rho_{\rm comp}$, so 

\begin{equation}
    \frac{T}{\log\left(\varepsilon^{-1}\right)} \leq \frac{C}{\Delta_p}  T_1\left(\frac{1}{2}\right) = \sqrt{2}C\left(1 + c_\tau \tau\right)\sqrt{\frac{1 + \kappa}{\tilde{\gamma}}}
\end{equation}

The sum of the two bounds is a valid upper bound in all situations, which finishes the proof.
\end{proof} 

\end{document}